\documentclass[reqno]{amsart}
\usepackage{amsfonts}
\usepackage{amssymb}
\usepackage{color}

\setcounter{MaxMatrixCols}{10}

\theoremstyle{plain}
\newtheorem{thm}{Theorem}

\newtheorem{lem}{Lemma}
\newtheorem{proposition}{Proposition}

\theoremstyle{definition}
\newtheorem{defn}{Definition}
\theoremstyle{remark}
\newtheorem{rem}{Remark}
\numberwithin{equation}{section}

\begin{document}
\title{Quasisymmetric minimality on packing dimension for homogeneous perfect sets}
\author{Shishuang Liu}
\address[Shishuang Liu]{College of Mathematics and Information Science, Guangxi University, Nanning, 530004, P.~R. China}
\email{lssnlb@163.com}

\author{Yanzhe Li$^{*}$}
\address[Yanzhe Li]{College of Mathematics and Information Science, Guangxi University, Nanning, 530004, P.~R. China}
\email{lyzkbm@163.com}

\author{Jiaojiao Yang}
\address[Jiaojiao Yang]{School of Mathematics and Statistics, Anhui Normal University, Wuhu, 241002, P.~R. China}
\email{y.jiaojiao1025@ahnu.edu.cn}

\thanks{This work is supported by National Natural Science Foundation of China (Grant No. 11901121 and Grant No.11801199), Guangxi Natural Science Foundation (2020GXNSFAA297040) and Natural Science Foundation of Anhui Province (CN)(1908085QA30). }
\thanks{*Corresponding author. }
\subjclass[2010]{28A78;28A80}
\keywords{homogeneous perfect sets; quasisymmetric mappings; quasisymmetric packing minimality.}

\begin{abstract}
In this paper, we study the quasisymmetric packing minimality of homogeneous perfect sets, and obtain that a special class of homogeneous perfect sets  with $\operatorname{dim}_{P}E=1$ is quasisymmetrically packing minimal.

\end{abstract}

\maketitle

\section{Introduction}
\label{sec:intro}

	Let $f:\mathbb{R}^{n}\to\mathbb{R}^{n}$ be a homeomorphism, if there exists a homeomorphism $\eta:[0,\infty)\to \mathbb{R}^{n}$, such that any triple points $a,b,x\in\mathbb{R}^{n}$ satisfy
\begin{equation*}
  \frac{\left|f(x)-f(a)\right|}{\left|f(x)-f(b)\right|}\le\eta(\frac{\left|x-a\right|}{\left|x-b\right|}),
\end{equation*}
then $f$ is called a $n-$dimensional quasisymmetric mapping. The quasisymmetric mappings contain the Lipschitz mappings, but the fractal dimensions of the fractal sets may not invariant under the quasisymmetric mappings, where the Lipschitz mappings preserve the fractal dimensions. How the quasisymmetric mappings change the fractal dimensions  has always been a hot research topic in the cross-over study of the fractal geometry and the quasisymmetric mappings,  quasisymmetrically minimal sets are  very important research objects in this subject. Suppose $E\subset\mathbb{R}^{n}$, then $E$ is called a quasisymmetrically Hausdorff minimal set if for any $n-$dimensional quasisymmetric mapping $f$,  we have $\dim_{H}f(E)\ge \dim_{H}E$. Similarly, we can define quasisymmetrically packing minimal set by packing dimension.

 In recent years, scholars  have done a lot of research on quasisymmetrically minimal sets. There are some typical results on quasisymmetric Hausdorff minimality, see in \cite{BCJ,TJ,GFW1,GFW2,KLV,HHA,HW,DWX,WW,XYQ}. Compared with quasisymmetric Hausdorff minimality, there are few results on quasisymmetric packing minimality. Kovalev\cite{KLV} showed that if $E\subset \mathbb{R}$ is a  quasisymmetrically packing minimal set, then $\dim_{P}E=0$ or $\dim_{P}E=1$. It is obvious that any set with packing dimension 0 is a quasisymmetrically packing minimal set, then we focus on the quasisymmetric packing minimality on the sets in $\mathbb{R}$ with packing dimension 1. Li, Wu and Xi\cite{LWX} obtained that two large classes of Moran sets in $\mathbb{R}$  with packing dimension 1 are quasisymmetrically packing minimal. Wang and Wen\cite{WW} proved that all uniform Cantor sets with packing dimension 1 are quasisymmetrically packing minimal.

A large class of the homogeneous perfect sets with packing dimension 1 is proved to be quasisymmetrically packing minimal in this paper, which generalizes a result in \cite{WW}.

This paper is organized as follows. In Section 2, we recall the definition of homogeneous perfect sets  and give some lemmas. In Section 3, we state our main result. The proof of our main result is given in Section 4 and Section 5.
\bigskip

\section{Preliminaries}
\label{sec:pre}
\subsection{Homogeneous Perfect Sets}
Wen and Wu gave the definition of the homogeneous perfect sets in \cite{WW05}, now we recall it.

Let the  sequences $\left\{c_{k}\right\}_{k\ge1}\subset \mathbb{R}^{+}$ and $\left\{n_{k}\right\}_{k\ge1}\subset \mathbb{Z}^{+}$ with $n_{k}\ge 2$ and $n_{k}c_{k}< 1$ for any $k\ge1$. For any $k\ge1$,
let $D_{k}=\left\{i_{1}i_{2}\cdots i_{k}:1\le i_{j} \le n_{j},1\le j \le k\right\}$, $D_{0}=\emptyset$ and $D=\cup_{k\ge0}D_{k}$. If $\sigma=\sigma_{1}\sigma_{2}\cdots\sigma_{k}\in D_{k}$, $\tau=\tau_{1}\tau_{2}\cdots\tau_{m}$, where $1\le \tau_{i}\le n_{k+i}$ for any $1\le i\le m$, let $\sigma*\tau =\sigma_{1}\sigma_{2}\cdots\sigma_{k}\tau_{1}\tau_{2}\cdots\tau_{m}\in D_{k+m}$.

\begin{defn} \label{HPS}(Homogeneous perfect sets \cite{WW05})

For a closed interval  $I_{0}\subset\mathbb{R}$ with $I_{0} \neq \emptyset$, which we call the initial interval, we say the collection of closed subintervals  $\mathcal{I}=\left\{I_{\sigma}:\sigma\in{D}\right\}$ of $I_{0}$ has homogeneous perfect structure if it satisfies:
\begin{enumerate}
\item[\textup{(1)}] $I_{\emptyset}=I_{0}$;
\item[\textup{(2)}] for any $k\ge1$ and $\sigma\in{D_{k-1}}$, $I_{\sigma * 1}, \cdots, I_{\sigma * n_{k}}$ are closed subintervals of $I_{\sigma}$  with $\min(I_{\sigma*(l+1)})\ge \max(I_{\sigma*l})$ for any $1\le l \le n_{k}-1$;
\item[\textup{(3)}] for any $k\ge1$ and $\sigma\in{D_{k-1}}$, $1\le j \le n_{k}$,$$\frac{\left|I_{\sigma * j}\right|}{\left|I_{\sigma}\right|}=c_{k},$$
where the diameter of the set $A$ is denoted by $\left|A\right|$;
\item[\textup{(4)}]  there exists a sequence  $\{\xi_{k,l}\}_{k\ge1,\ 0\le l\le n_k}\subset \mathbb{R}^{+}\cup \{0\}$ satisfying for any $k\ge1$ and $\sigma\in{D_{k-1}}$,
$$\min(I_{\sigma*1})-\min(I_{\sigma})=\xi_{k,0};$$ $$\min(I_{\sigma*(l+1)})-\max(I_{\sigma*l})=\xi_{k,l}(\forall 1\le l \le n_{k}-1);$$
		$$\max(I_{\sigma})-\max(I_{\sigma*n_{k}})=\xi_{k,n_{k}}.$$
\end{enumerate}

If $\mathcal{I}$ has homogeneous perfect structure, let $E_{k}=\cup_{\sigma\in{D_{k}}}I_{\sigma}$ for any $k\ge0$, , then
	$E=\cap_{k\ge0}E_{k}=E(I_{0},\left\{n_{k}\right\},\left\{c_{k}\right\},\left\{\xi_{k,l}\right\})$ is called a homogeneous perfect set.
	For any $k\ge0$, let $\mathcal{E}_{k}=\left\{I_{\sigma}:\sigma\in{D_{k}}\right\}$, then any $I_{\sigma}$ in $\mathcal{E}_{k}$ is called a $k$-order basic interval of $E$.
\end{defn}

\begin{rem}
If $E=E(I_{0},\left\{n_{k}\right\},\left\{c_{k}\right\},\left\{\xi_{k,l}\right\})$ with $\xi_{k,0}=\xi_{k,n_{k}}=0$, $\xi_{k,i}=\xi_{k,j}$ for any $k\ge 1$ and $1\le i<j\le n_{k}$, then $E$ is a uniform Cantor set(see in \cite{WW}).
\end{rem}


\bigskip


\subsection{Some Lemmas}
We need the follwing lemmas to finish our proof.

The next lemma gives the packing dimension formula for some homogeneous perfect sets.
\begin{lem}\label{lem1}\cite{WXY}
Suppose that
	$E=E(I_{0},\left\{n_{k}\right\},\left\{c_{k}\right\},\left\{\xi_{k,l}\right\})$, and there exists a real number $\chi\ge 1$ such that $\max_{ 1 \leq l \leq n_{k}-1}\xi_{k,l}\le \chi\min_{ 1 \leq l \leq n_{k}-1}\xi_{k,l}$ for any $k\ge 1$, then
	\begin{equation*}
	\operatorname{dim}_{P}E=\varlimsup_{k \rightarrow \infty}\frac{\log n_{1}\cdots n_{k}n_{k+1}}{-\log\frac{c_{1}c_{2}\cdots c_{k}-\xi_{k+1, 0}-\xi_{k+1, n_{k+1}}}{n_{k+1}}}.
	\end{equation*}
\end{lem}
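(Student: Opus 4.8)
The plan is to show that for sets in this class the packing dimension coincides with the upper box dimension, and then to compute the latter directly by covering the basic intervals. Normalise $|I_{0}|=1$ and write $N_{k}=n_{1}\cdots n_{k}$ and $\delta_{k}=c_{1}\cdots c_{k}$ for the number and the common length of the $k$-order basic intervals, and put
\[
\epsilon_{k+1}=\frac{\delta_{k}-\xi_{k+1,0}-\xi_{k+1,n_{k+1}}}{n_{k+1}},\qquad L_{k}=n_{k+1}\epsilon_{k+1},
\]
so that $L_{k}$ is the length of the convex hull of the $n_{k+1}$ children $I_{\sigma*1},\dots,I_{\sigma*n_{k+1}}$ of a given $\sigma\in D_{k}$ and the right-hand side of the lemma is $s:=\varlimsup_{k}\frac{\log N_{k+1}}{-\log\epsilon_{k+1}}$. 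Since $n_{j}c_{j}<1$ we have $\delta_{k}\to0$, hence $\epsilon_{k+1}\le\delta_{k}\to0$ and $-\log\epsilon_{k+1}\to\infty$, which lets us discard additive constants in numerators without cost.

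For $\operatorname{dim}_{P}E\le s$ it suffices, since $\operatorname{dim}_{P}E\le\ubdim E$, to bound the least number $N_{\delta}(E)$ of intervals of length $\delta$ needed to cover $E$. For $\delta\in[\delta_{k+1},\delta_{k}]$ one covers the part of $E$ inside each $k$-order basic interval either by its $n_{k+1}$ children (each of length $\delta_{k+1}\le\delta$) or by $\lceil L_{k}/\delta\rceil$ intervals tiling their convex hull, whichever is cheaper; this gives $N_{\delta}(E)\le N_{k}\min\{n_{k+1},\,L_{k}/\delta+1\}$. Splitting $[\delta_{k+1},\delta_{k}]$ at the kink $\delta=\epsilon_{k+1}$ and using $N_{k}L_{k}=N_{k+1}\epsilon_{k+1}$, a short computation shows $\sup_{\delta\in[\delta_{k+1},\delta_{k}]}\frac{\log N_{\delta}(E)}{-\log\delta}\le\frac{\log N_{k+1}}{-\log\epsilon_{k+1}}+o(1)$, and letting $\delta\to0$ gives $\ubdim E\le s$.

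For the reverse inequality I would use the identity $\operatorname{dim}_{P}E=\inf\{\sup_{i}\ubdim F_{i}:E\subseteq\bigcup_{i}F_{i}\}$ (the modified upper box dimension). Given such a cover, replace each $F_{i}$ by $\overline{F_{i}}\cap E$; since $E$ is compact, Baire's theorem produces an index $i_{0}$ and a nonempty open set $U$ with $\overline{F_{i_{0}}}\supseteq E\cap U$, so $\sup_{i}\ubdim F_{i}\ge\ubdim(E\cap U)\ge\ubdim(E\cap I_{\tau})$ for any basic interval $I_{\tau}$ with $\intr I_{\tau}\subseteq U$. As $E\cap I_{\tau}$ is again a homogeneous perfect set of the same type (with parameters shifted by $|\tau|$) still satisfying the $\chi$-condition, it suffices to prove the lower box estimate $\ubdim F\ge\varlimsup_{k}\frac{\log(n_{1}\cdots n_{k+1})}{-\log\epsilon_{k+1}}$ for every such $F$; applied to $E\cap I_{\tau}$ and then discarding the fixed constant $\log(n_{1}\cdots n_{|\tau|})$ (harmless since the denominators tend to $\infty$), this yields $\ubdim(E\cap I_{\tau})\ge s$, hence every cover satisfies $\sup_{i}\ubdim F_{i}\ge s$ and therefore $\operatorname{dim}_{P}E\ge s$. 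For the lower box estimate I would evaluate $N_{\delta}(F)$ at $\delta=\epsilon_{k+1}$ through $N_{\delta}(F)\ge\lambda(F(\delta))/(3\delta)$, where $\lambda$ is Lebesgue measure and $F(\delta)$ the $\delta$-neighbourhood of $F$: this is precisely where the hypothesis $\max_{l}\xi_{k,l}\le\chi\min_{l}\xi_{k,l}$ enters, since it forces the $n_{k+1}$ children inside each $k$-order basic interval to be spaced at mutually comparable distances of order $\epsilon_{k+1}$, so that their $\epsilon_{k+1}$-neighbourhoods contribute at least a $\chi$-dependent constant multiple of $n_{k+1}\epsilon_{k+1}$ to $\lambda$ in that interval, with only $O(1)$ overlap between adjacent intervals; consequently $\lambda(F(\epsilon_{k+1}))\gtrsim N_{k+1}\epsilon_{k+1}$ and $N_{\epsilon_{k+1}}(F)\gtrsim N_{k+1}$, which is what is needed.

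The step I expect to be the main obstacle is this last lower box estimate: one has to convert the comparability of the gaps into a genuine quantitative statement about how evenly the children are distributed at the intermediate scale $\epsilon_{k+1}$ and to control the overlaps carefully — without the $\chi$-condition the children may cluster, so that the covering number at scale $\epsilon_{k+1}$ drops by a factor as large as $n_{k+1}$, which, when the $n_{k}$ are unbounded, changes the value of the limsup. By comparison the upper bound and the Baire/modified-box-dimension reduction are routine, the only mild point being the verification that the covering quotient is largest at the kink $\delta=\epsilon_{k+1}$.
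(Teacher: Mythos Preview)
The paper does not prove this lemma at all: it is quoted verbatim from \cite{WXY} (Wang--Wu, 2008) and used as a black box, so there is no ``paper's own proof'' to compare against. Your sketch is therefore not competing with anything in the present paper.

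As for the sketch itself, the strategy is the standard one and is correct. The upper box bound via the two-regime covering (hull versus children, with the kink at $\delta=\epsilon_{k+1}$) is routine once you observe $\delta_{k+1}\le\epsilon_{k+1}\le\delta_{k}/2$, so the interpolation you gloss over does go through. The reduction of the packing-dimension lower bound to a lower box bound on $E\cap I_{\tau}$ via Baire and the modified box dimension is also fine; the shift by $|\tau|$ only changes numerator and denominator by additive constants, and both tend to $\infty$, so the limsup is unaffected, exactly as you say.

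The point you flag as the obstacle---that the $\chi$-condition forces the $n_{k+1}$ children of a fixed parent to be spread out at scale $\epsilon_{k+1}$---is indeed the heart of the matter, and your heuristic is the right one. Concretely: since the interior gaps $\xi_{k+1,1},\dots,\xi_{k+1,n_{k+1}-1}$ are pairwise $\chi$-comparable, each of them is at least $\chi^{-1}$ times their average, and that average is at least $\epsilon_{k+1}-\delta_{k+1}$; hence $\delta_{k+1}+\underline{\xi}_{k+1}\ge\chi^{-1}\epsilon_{k+1}$, so an interval of length $\epsilon_{k+1}$ meets at most $O(\chi)$ children of any one parent. Because $\epsilon_{k+1}\le\delta_{k}/2$, such an interval cannot contain a full $k$-order interval and therefore meets children of at most two parents; this gives $N_{\epsilon_{k+1}}(E)\ge c(\chi)\,N_{k+1}$, which is exactly what you need. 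So the sketch closes, and the place you identified as delicate is delicate for precisely the reason you give: without the $\chi$-condition the children can cluster and the packing dimension can exceed the displayed limsup.
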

\bigskip

We need the mass distribution principle to estimate the lower bound of the packing dimension of the sets.
\begin{lem}\label{lem2}\textmd{\rm (Mass distribution principle \cite{WZY})}
Suppose that $s\ge0$, let $\mu$ be a Borel probability measure on  a Borel set $E\subseteq\mathbb{R}$. If there is a positive constant $C$, such that
	\begin{equation*}
	\varliminf_{r \rightarrow 0}\frac{\mu(B(x,r))}{r^{s}}\le C\
	\end{equation*}
for any	$x\in E$, then $\operatorname{dim}_{P}E\ge s.$
\end{lem}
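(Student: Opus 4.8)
The plan is the standard two-step derivation of a packing-dimension lower bound from a mass distribution. Since $\mu$ is a Borel probability measure on $E$ we have $\mu(E)=1>0$; the case $s=0$ is vacuous, so assume $s>0$. In Step 1 I would convert the metric hypothesis into the statement that around \emph{every} point of $E$ there are arbitrarily small balls of very small $\mu$-mass. In Step 2 I would use these balls, at each scale $\delta$, to assemble a disjoint $\delta$-packing of $E$ whose cost in a chosen exponent $t'<s$ blows up as $\delta\to0$, so that the $t'$-packing premeasure of $E$ is infinite for every $t'<s$, which gives $\operatorname{dim}_{P}E\ge s$.

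\textbf{Step 1 (reduction).} Fix $x\in E$ and $t$ with $0\le t<s$. By hypothesis there is a sequence $r_{k}\downarrow 0$ with $\mu(B(x,r_{k}))\le(C+1)r_{k}^{s}$ for all large $k$; since passing from open to closed balls costs at most a factor $2^{s}$, we get $\mu(\overline{B}(x,r_{k}))/r_{k}^{t}\le 2^{s}(C+1)\,r_{k}^{\,s-t}$, and the right-hand side tends to $0$ as $k\to\infty$ because $s>t$. Hence for every $t<s$ and every $x\in E$ there are arbitrarily small radii $r$ with $\mu(\overline{B}(x,r))<r^{t}$. (Equivalently, $\limsup_{r\to0}\frac{\log\mu(\overline{B}(x,r))}{\log r}\ge s$ for every $x\in E$; at this point one may instead simply invoke the standard fact that a finite Borel measure whose upper local exponent is $\ge s$ on a set of positive measure forces that set to have packing dimension $\ge s$, which completes the proof.)

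\textbf{Step 2 (packing construction).} Fix $t',t$ with $0\le t'<t<s$, a scale $\delta>0$, and an arbitrary $A\subseteq E$ with $\mu^{*}(A)>0$. By Step 1 the family $\mathcal{F}=\{\overline{B}(x,r):x\in A,\ 0<r\le\delta,\ \mu(\overline{B}(x,r))<r^{t}\}$ is a fine (Vitali) cover of $A$, so the Vitali covering theorem, valid for any finite Borel measure on $\mathbb{R}$, produces a countable pairwise disjoint subfamily $\{\overline{B}(x_{i},r_{i})\}_{i}\subseteq\mathcal{F}$ with $\mu^{*}\bigl(A\setminus\bigcup_{i}\overline{B}(x_{i},r_{i})\bigr)=0$. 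Using disjointness and $r_{i}\le\delta$,
\begin{equation*}
\mu^{*}(A)\le\sum_{i}\mu(\overline{B}(x_{i},r_{i}))\le\sum_{i}r_{i}^{t}=\sum_{i}r_{i}^{\,t-t'}r_{i}^{t'}\le\delta^{\,t-t'}\sum_{i}r_{i}^{t'},
\end{equation*}
so $\sum_{i}(2r_{i})^{t'}\ge 2^{t'}\mu^{*}(A)\,\delta^{-(t-t')}$. Since $\{\overline{B}(x_{i},r_{i})\}$ is a disjoint $\delta$-packing of $A$ and the right side blows up as $\delta\to0$, the $t'$-packing premeasure satisfies $\mathcal{P}_{0}^{t'}(A)=\infty$; this holds for \emph{every} $A\subseteq E$ with $\mu^{*}(A)>0$ (replace $A$ by a Borel hull if it is not measurable). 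By monotonicity of $\mathcal{P}_{0}^{t'}$ it suffices to consider covers $E=\bigcup_{j}A_{j}$ with $A_{j}\subseteq E$; for any such cover $\sum_{j}\mu^{*}(A_{j})\ge\mu(E)=1$, so some $A_{j_{0}}$ has $\mu^{*}(A_{j_{0}})>0$ and hence $\mathcal{P}_{0}^{t'}(A_{j_{0}})=\infty$, whence $\mathcal{P}^{t'}(E)=\infty$ and $\operatorname{dim}_{P}E\ge t'$. Letting $t'\uparrow s$ yields $\operatorname{dim}_{P}E\ge s$.

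\textbf{Main obstacle.} The only genuinely non-elementary ingredient is the Vitali covering theorem for an arbitrary finite Borel measure on $\mathbb{R}$ (equivalently, the Besicovitch covering property of $\mathbb{R}$): it is precisely what lets one trade the a priori scale $\delta$ for the point-dependent ``good'' radii at which $\mu$ is small while still capturing $\mu$-almost all of $A$. The rest is elementary algebra with the defining inequality, together with the routine bookkeeping (premeasure versus packing measure, measurable hulls) built into the definition of packing dimension. If one is willing to cite the measure-theoretic characterisation of packing dimension through the upper local exponent, the whole argument collapses to Step 1.
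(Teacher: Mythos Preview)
The paper does not prove this lemma at all: it is stated with a citation to \cite{WZY} and used as a black box, so there is no ``paper's own proof'' to compare against.

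Your argument is the standard and correct derivation. Step~1 correctly converts the $\varliminf$ hypothesis into the existence, at every $x\in E$, of arbitrarily small radii with $\mu(\overline{B}(x,r))<r^{t}$ for any $t<s$; the factor $2^{s}$ when passing to closed balls is justified by $\overline{B}(x,r/2)\subset B(x,r)$. Step~2 is the usual Vitali/Besicovitch packing argument: the key point, which you flag, is that on $\mathbb{R}$ (more generally $\mathbb{R}^{n}$) every finite Borel measure has the Vitali property for closed balls, so from the fine cover $\mathcal{F}$ one extracts a disjoint $\delta$-packing capturing full $\mu^{*}$-mass of $A$, and the chain $\mu^{*}(A)\le\sum_{i}r_{i}^{t}\le\delta^{t-t'}\sum_{i}r_{i}^{t'}$ forces $\mathcal{P}_{0}^{t'}(A)=\infty$. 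Your handling of the premeasure-to-measure step is also correct: replacing each $E_{j}$ in a countable cover by $E_{j}\cap E$ only decreases $\mathcal{P}_{0}^{t'}$, and subadditivity of $\mu^{*}$ guarantees some piece has positive outer measure. The only cosmetic point is that the ``Borel hull'' remark is unnecessary, since the Besicovitch--Vitali theorem applies to arbitrary sets $A$ with the conclusion phrased via outer measure, exactly as you use it.
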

\bigskip

The following lemma shows some  relationships  between the lengths for the image sets of the quasissymmertic mappings and the lengths for the original sets.
\begin{lem}\label{lem3}\cite{wu93}
Let $f:\mathbb{R}\to \mathbb{R}$ be a \text{\rm1}-dimensional quasisymmetric mapping, then there exist positive real numbers $\beta>0$, $K_{\rho}>0$ and $0<p\le 1\le q$ such that
	$$\beta(\frac{\left|I^{'}\right|}{\left|I\right|})^{q}\le \frac{\left|f(I^{'})\right|}{\left|f(I)\right|}\le 4(\frac{\left|I^{'}\right|}{\left|I\right|})^{p},$$
	where $I^{'}$ and $I$ are any  intervals satisfying $I^{'}\subseteq I$ ,
and
$$\frac{\left|f(\rho I)\right|}{\left|f(I)\right|}\le K_{\rho},$$
where for any $\rho>0$, $\rho I$ denotes the   interval with the same center of the interval $I$, and $|\rho I|=\rho |I|$.
\end{lem}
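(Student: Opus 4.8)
The plan is to reduce the lemma to the \emph{weak} two-sided quasisymmetry inequality on $\mathbb{R}$ together with an iterated bisection argument. First I would observe that, inserting the triple $a=x+t$, $b=x-t$ into the definition of a quasisymmetric mapping, one obtains a constant $H:=\eta(1)\ge 1$ with
\[
\frac{1}{H}\le\frac{|f(x+t)-f(x)|}{|f(x)-f(x-t)|}\le H\qquad\text{for all }x\in\mathbb{R},\ t>0,
\]
and that a homeomorphism of $\mathbb{R}$ is strictly monotone, so after possibly replacing $f$ by $-f$ we may assume $f$ is increasing; then $|f([a,b])|=f(b)-f(a)$ and two adjacent intervals have adjacent images whose lengths add. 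Applying the weak inequality to the two halves $I_{L},I_{R}$ of an interval $I$ gives $|f(I)|=|f(I_{L})|+|f(I_{R})|$ together with $|f(I_{R})|\le H|f(I_{L})|$ and symmetrically, hence the \emph{bisection estimate}
\[
\lambda\le\frac{|f(I_{L})|}{|f(I)|},\ \frac{|f(I_{R})|}{|f(I)|}\le 1-\lambda,\qquad\lambda:=\frac{1}{1+H}\in\bigl(0,\tfrac12\bigr].
\]

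Next I would iterate this estimate. If $J$ is one of the $2^{n}$ dyadic subintervals of $I$ obtained by $n$ successive midpoint bisections, then telescoping $|f(J)|/|f(I)|$ along the chain $I=I^{(0)}\supseteq\cdots\supseteq I^{(n)}=J$ and applying the bisection estimate to each factor yields $\lambda^{n}\le |f(J)|/|f(I)|\le(1-\lambda)^{n}$. Writing $q:=\log_{2}(1/\lambda)\ge 1$, $p:=\log_{2}\bigl(1/(1-\lambda)\bigr)\in(0,1]$ and using $|J|/|I|=2^{-n}$, this says precisely that $(|J|/|I|)^{q}\le |f(J)|/|f(I)|\le(|J|/|I|)^{p}$. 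For an arbitrary subinterval $I'\subseteq I$, set $t:=|I'|/|I|$ and pick $n\ge 0$ with $2^{-(n+1)}\le t\le 2^{-n}$. Then $I'$ meets at most two consecutive generation-$n$ dyadic subintervals of $I$, so $|f(I')|\le 2(1-\lambda)^{n}|f(I)|\le 4t^{p}|f(I)|$ (using $1-\lambda\ge\tfrac12$); and since $|I'|=t|I|$ is at least twice the length of a generation-$(n+2)$ dyadic subinterval, $I'$ contains such a subinterval $J$, whence $|f(I')|\ge|f(J)|\ge\lambda^{n+2}|f(I)|\ge\lambda^{2}t^{q}|f(I)|$, and $\lambda^{2}=4^{-q}$, so $\beta:=4^{-q}$ works. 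This gives the first displayed chain of inequalities of the lemma.

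Finally, the dilation inequality follows formally from the bounds just established: for $\rho\ge 1$ one has $I\subseteq\rho I$, so the lower bound applied to the pair $I\subseteq\rho I$ gives $|f(\rho I)|/|f(I)|\le\beta^{-1}\rho^{q}$; for $0<\rho<1$ one has $\rho I\subseteq I$, so the upper bound gives $|f(\rho I)|/|f(I)|\le 4\rho^{p}\le 4$; hence $K_{\rho}:=\max\{4,\beta^{-1}\rho^{q}\}$ works. Conceptually nothing here is deep: the part needing the most care is the constant bookkeeping in the dyadic approximation, so that the two consecutive generation-$n$ intervals together with $1-\lambda\ge\tfrac12$ produce exactly the constant $4$, and that the factors $\lambda^{2}$ and $4^{-q}$ match. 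The uniformity of $H$ in the weak inequality causes no trouble here because the ratio $|x-a|/|x-b|=1$ is actually attained, so the strong-to-weak passage is immediate. I expect that constant bookkeeping — rather than any genuine difficulty — to be the main thing to handle carefully.
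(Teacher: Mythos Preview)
The paper does not prove this lemma at all: it is quoted verbatim from \cite{wu93} with no argument supplied, so there is nothing in the paper to compare your proof against. That said, your argument is correct and is exactly the classical route to these estimates for quasisymmetric maps of $\mathbb{R}$: pass from $\eta$-quasisymmetry to the weak (Beurling--Ahlfors) condition with $H=\eta(1)$, deduce the two-sided bisection bound $\lambda\le |f(I_{L})|/|f(I)|\le 1-\lambda$ with $\lambda=1/(1+H)$, iterate along dyadic chains to get power bounds with exponents $p=\log_2\!\bigl(1/(1-\lambda)\bigr)\le 1\le q=\log_2(1/\lambda)$, and then handle a general $I'\subseteq I$ by sandwiching it between one generation-$(n+2)$ dyadic subinterval it contains and the union of the two generation-$n$ dyadic subintervals it meets. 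Your constant bookkeeping is accurate: $2(1-\lambda)^n\le 2\cdot(2t)^p\le 4t^p$ uses $p\le 1$, and $\lambda^{n+2}=\lambda^2\lambda^n\ge 4^{-q}t^q$ uses $\lambda=2^{-q}$, giving $\beta=4^{-q}$. The dilation bound is then an immediate corollary of the first inequality, as you observe. The only small point worth making explicit in a final write-up is the elementary fact you use implicitly, that any subinterval of $I$ of length at least $2\cdot 2^{-(n+2)}|I|$ must contain a full generation-$(n+2)$ dyadic subinterval of $I$; this follows from a one-line ceiling argument, but deserves a sentence.
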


\bigskip
\section{Main result}
Our main result is stated as follows.
\begin{thm}\label{thm}
	Suppose
	$E=E(I_{0},\left\{n_{k}\right\},\left\{c_{k}\right\},\left\{\xi_{k,l}\right\})$, and there exists a real number $\chi\ge 1$, such that $\max_{ 1 \leq l \leq n_{k}-1}\xi_{k,l}\le \chi\min_{ 1 \leq l \leq n_{k}-1}\xi_{k,l}$ for any $k\ge 1$. If $\operatorname{dim}_{P}E=1$, then for any \text{\rm1}-dimensional quasisymmetric mapping $f$,  we have $\operatorname{dim}_{P}f(E)=1$.
\end{thm}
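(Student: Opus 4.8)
The plan is to build, for any given $1$-dimensional quasisymmetric map $f$, an explicit Borel probability measure $\mu$ supported on $f(E)$ and apply the mass distribution principle (Lemma \ref{lem2}) to conclude $\dim_P f(E)\ge s$ for every $s<1$. Since $\dim_P E=1$, Lemma \ref{lem1} tells us that along some subsequence $k_j\to\infty$ the quantity $\tfrac{\log n_1\cdots n_{k_j+1}}{-\log\bigl((c_1\cdots c_{k_j}-\xi_{k_j+1,0}-\xi_{k_j+1,n_{k_j+1}})/n_{k_j+1}\bigr)}$ tends to $1$. Writing $\delta_{k}=|I_\sigma|$ for a $k$-order basic interval and $\delta_{k}' = (c_1\cdots c_k - \xi_{k+1,0}-\xi_{k+1,n_{k+1}})/n_{k+1}$ for the spacing-adjusted length of the gaps-plus-interval scale at level $k+1$, this says that at these scales $E$ is "almost" uniformly spread across $n_1\cdots n_{k+1}$ pieces whose separation is comparable (up to the factor $\chi$) so that $E$ looks, at resolution $\delta_{k_j}'$, like a set of box dimension close to $1$. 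The natural measure is the uniform (coin-tossing) measure $\nu$ on $E$ that assigns mass $(n_1\cdots n_k)^{-1}$ to each $k$-order basic interval, pushed forward to $\mu = f_*\nu$ on $f(E)$.

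The core estimate is to control $\mu(B(y,r))$ for $y=f(x)\in f(E)$ and small $r$. Given $r$, one chooses the integer $k$ (depending on $r$, $x$) so that the image $f(I_\sigma)$ of the $k$-order basic interval $I_\sigma$ containing $x$ has length comparable to $r$; then $B(y,r)$ meets only a controlled number of $k$-order image intervals because of the uniform lower gap bound coming from the hypothesis $\max_l\xi_{k,l}\le\chi\min_l\xi_{k,l}$ together with Lemma \ref{lem3}. Concretely, Lemma \ref{lem3} gives $\beta(|I'|/|I|)^q\le |f(I')|/|f(I)|\le 4(|I'|/|I|)^p$ for nested intervals and the bounded-expansion bound $|f(\rho I)|/|f(I)|\le K_\rho$; the latter, applied to the smallest interval $J$ of level $k-1$ (or a suitable enlargement) containing a bounded number of adjacent $k$-order intervals around $I_\sigma$, shows that $B(y,r)\subseteq f(\rho J)$ for a fixed $\rho$, hence $B(y,r)$ picks up at most $O(1)$ sibling intervals at level $k$, whence $\mu(B(y,r))\le C (n_1\cdots n_k)^{-1}$. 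On the other hand, from the upper bound in Lemma \ref{lem3} one gets $r\ge c\,|f(I_\tau)|\ge c\beta\,|f(I_0)|\,(\text{length ratio})^q\ge (\delta_k')^{q/p'}\cdot(\dots)$ type lower bounds relating $r$ to $\delta_{k}'$; carefully along the subsequence $k_j$ this yields $\mu(B(y,r))\le C\, r^{s}$ for the chosen $s<1$, i.e. $\varliminf_{r\to0}\mu(B(y,r))/r^s\le C$. Lemma \ref{lem2} then gives $\dim_P f(E)\ge s$, and letting $s\uparrow 1$ finishes the proof, since trivially $\dim_P f(E)\le 1$ as $f(E)\subset\mathbb{R}$.

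The main obstacle I anticipate is the bookkeeping that transfers the subsequential near-equality in the packing-dimension formula of Lemma \ref{lem1} into a genuine pointwise measure estimate $\mu(B(y,r))\le C r^s$ that holds for a sequence of radii $r=r_m(x)\to 0$ at \emph{every} point $y\in f(E)$: the level $k$ one must select depends on both $x$ and $r$, and one needs the separation structure at that level to be the "good" one, so the argument must be arranged so that the relevant radii are exactly those for which $k$ falls in (or just below) the subsequence $\{k_j\}$. This is where the hypothesis controlling the ratio of largest to smallest gap is essential — it guarantees the gaps at level $k$ are all comparable, so a ball of radius $r\asymp\delta_k'$ cannot collapse many basic intervals together regardless of which part of $E$ the point $x$ sits in. The quasisymmetry distortion estimates of Lemma \ref{lem3} must be applied uniformly (the constants $\beta,p,q,K_\rho$ do not depend on the level), and I would keep $\rho$ fixed throughout (e.g. $\rho=3$) so that $K_\rho$ is a single constant. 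A secondary technical point is handling points $x$ lying at the left/right ends of basic intervals where a one-sided ball sees fewer intervals — this only helps, so it can be absorbed.
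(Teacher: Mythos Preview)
Your proposal has a genuine gap at its core: the pushforward of the uniform measure is the wrong tool here, and the hand-wave ``carefully along the subsequence $k_j$ this yields $\mu(B(y,r))\le C r^{s}$'' cannot be made to work for $s$ close to $1$.

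Here is why. With your measure $\mu=f_*\nu$, every image interval $f(I_\sigma)$ at level $k$ carries the same mass $(n_1\cdots n_k)^{-1}$, regardless of its diameter. If $y=f(x)$ lies in a branch that $f$ has contracted near the extremal rate allowed by Lemma~\ref{lem3}, then for the $k$ at which $r\asymp|f(I_\sigma)|$ you only know $r\gtrsim (c_1\cdots c_k)^{q}$. Combining this with $\mu(B(y,r))\asymp (n_1\cdots n_k)^{-1}$ and the dimension-$1$ condition $(n_1\cdots n_k)^{-1}\approx c_1\cdots c_k$ (along your subsequence) gives at best
\[
\frac{\mu(B(y,r))}{r^{s}}\lesssim \frac{c_1\cdots c_k}{(c_1\cdots c_k)^{qs}},
\]
which is bounded only when $qs\le 1$. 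Since $q>1$ in general, this caps your conclusion at $\dim_P f(E)\ge 1/q$, not $1$. Quasisymmetric maps \emph{can} contract individual branches this severely, so some points $y$ really do sit in the bad regime at every level; there is no subsequence rescue.

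The paper avoids this by building the measure on the image side: it sets
\[
\mu_d(J_{m,i})=\frac{|J_{m,i}|^{d}}{\sum_j |J_{m,j}|^{d}}\,\mu_d(J_m),
\]
so that a heavily contracted branch automatically receives proportionally less mass. The main work (Proposition~\ref{prop}) is then to show $\mu_d(J)\le C|J|^d$ uniformly for branches $J$ at the levels $a_k$, which requires estimating the telescoping product $\prod_j |J_j|^d/\sum_i|J_{j,i}|^d$; this is where the $\chi$-comparability of gaps and the exponent $p$ from Lemma~\ref{lem3} enter, via a delicate splitting into ``good'' levels (small gap ratio) and ``bad'' levels (whose density is shown to vanish using Lemma~\ref{lem6}). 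A second ingredient you are missing is the reconstruction of $E$ with \emph{bounded} branching (the paper's Lemma~\ref{lem4}): since $n_k$ may be unbounded, the raw basic intervals cannot be used directly in these product estimates, and one must first interpolate intermediate levels $H_m$ so that each parent has at most $M^2$ children.
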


\begin{rem}
	In  Theorem \text{\rm1.2} of \text{\rm\cite{WW}}, Wang and Wen proved that  for any  uniform Cantor sets $E$,  if $\operatorname{dim}_{P}E=1$, then we have $\operatorname{dim}_{P}f(E)=1$ for any \text{\rm1}-dimensional quasisymmetric mapping $f$ .
	It is obvious that the homogeneous perfect sets satisfying the condition of Theorem  \text{\rm\ref{thm}} in this paper contain the uniform Cantor sets. Thus  Theorem \text{\rm\ref{thm}} in this paper generalizes  Theorem \text{\rm1.2} in \text{\rm\cite{WW}}.
\end{rem}

\bigskip

\section{The reconstruction of Homogeneous perfect sets}

In order to prove the Theorem \ref{thm}, we reconstruct the homogeneous perfect set $E=E(I_{0},\left\{n_{k}\right\},\left\{c_{k}\right\},\left\{\xi_{k,l}\right\})$ and represent it as an equivalent form which is easier to discuss in our proof.

For any $k\ge 0$, $\sigma\in D_{k}$, let $I_{\sigma}^{*}$ be a closed subinterval of $I_{\sigma}$ satisfying the following conditions:
\begin{enumerate}
	\item[\textup{(A)}]$\min(I_{\sigma}^{*})-\min(I_{\sigma})=\xi_{k+1, 0}$,  $\max(I_{\sigma})-\max(I_{\sigma}^{*})=\xi_{k+1, n_{k+1}}$;
	\item[\textup{(B)}]$\left|I_{\sigma}^{*}\right|=\sum_{l=1}^{n_{k+1}-1}\xi_{k+1,l}+c_{1}c_{2}\cdots c_{k+1}n_{k+1}$.
\end{enumerate}

 Let $I_{0}^{*}=I_{\emptyset}^{*}$, denote $\delta_{k}=\left|I_{\sigma}^{*}\right|$, $\delta_{0}=\left|I_{0}^{*}\right|$ for any $ k\ge 1$ and $\sigma\in D_k$. Suppose that $E_{k}^{*}=\cup_{\sigma\in{D_{k}}}I_{\sigma}^{*}$ for any $k\ge 0$ and $\sigma\in D_k$, then it obvious that $E=\cap_{k\ge0}E_{k}=\cap_{k\ge0}E_{k}^{*}$.

In fact, $E=E(I_{0}^{*},\left\{n_{k}^{*}\right\},\left\{c_{k}^{*}\right\},\{\xi_{k,l}^{*}\})$ is a homogeneous perfect set with the following parameters for any $k\ge 1$ , where $\mathcal{I}^{*}=\{I_{\omega}^{*}:\omega\in D\}$ has homogeneous perfect structrue:
\begin{enumerate}
	\item[\textup{(1)}]$I_{0}^{*}=I_{0}-[\min(I_{0}),\min(I_{0})+\xi_{1, 0})-(\max(I_{0})-\xi_{1, n_{1}},\max(I_{0})]$;
	\item[\textup{(2)}]$c_{k}^{*}=\frac{\delta_{k}}{\delta_{k-1}}$, $n_{k}^{*}=n_{k}$;
	\item[\textup{(3)}]$\xi_{k,l}^{*}=\xi_{k,l}+\xi_{k+1, 0}+\xi_{k+1, n_{k+1}}(\forall 1\le l\le n_{k}-1)$, $\xi_{k,0}^{*}=\xi_{k+1, 0}$, $\xi_{k, n_{k}}^{*}=\xi_{k+1, n_{k+1}}$.
\end{enumerate}

 For any $k\ge1$, denote
 $$N_{k}^{*}=n_{1}^{*}n_{2}^{*}\cdots n_{k}^{*},\ \delta_{k}^{*}=\delta_{0}c_{1}^{*}c_{2}^{*}\cdots c_{k}^{*}.$$
 $$\underline{\alpha}_{k}^{*}=\min_{ 1 \leq l \leq n_{k}-1}\xi_{k,l}^{*},\ \overline{\alpha}_{k}^{*}=\max_{ 1 \leq l \leq n_{k}-1}\xi_{k,l}^{*}.$$

 It is easy to obtain that
\begin{equation}\label{rs1}
\xi_{k,0}^{*}+\xi_{k,n_{k}}^{*}\le \underline{\alpha}_{k}^{*},
\end{equation}
and if $E$ satisfies the condition of Theorem \ref{thm}, then
\begin{equation}\label{rs2}
\overline{\alpha}_{k}^{*}\le \chi\underline{\alpha}_{k}^{*},
\end{equation}
where $\chi$ is the constant in Theorem \ref{thm}.
 \bigskip

 The folllowing lemma gives a new form of the homogeneous perfect sets in Theorem \text{\rm\ref{thm}}.
\begin{lem}\label{lem4}
	Suppose $E=E(I_{0},\left\{n_{k}\right\},\left\{c_{k}\right\},\left\{\xi_{k,l}\right\})$ satisfies the condition of Theorem \text{\rm\ref{thm}}, then there exists $\left\{H_{m}\right\}_{m\ge 0}$, which is a sequence of closed sets with length decreasing, such that $E=\cap_{k\ge0}E_{k}=\cap_{k\ge0}E_{k}^{*}=\cap_{m\ge0}H_{m}$. Furthermore,  $\left\{H_{m}\right\}_{m\ge 0}$ satisfies:
	\begin{enumerate}
		\item[\textup{(1)}] For any $m\ge0$, $H_{m}$ is a union of a finite number of closed intervals whose interiors are disjoint , which are called the branches of $H_{m}$. Denote $\mathcal{H}_{m}=\{A: A ~~~~\text{is a branch of}~~~~ H_{m}\}$;
		\item[\textup{(2)}] $\left\{E_{k}^{*}\right\}_{k\ge0}\subset\left\{H_{m}\right\}_{m\ge 0}$ and $H_{m_{k}}=E_{k}^{*}$ for any $k\ge 0$;
		\item[\textup{(3)}] There exists $M \in \mathbb{Z}^{+}$ with $M>2\chi$ such that each branch of $H_{m-1}$ contains at most $M^{2}$ branches of $H_{m}$ for any $m\ge1$, where $\chi$ is the constant in Theorem \ref{thm};
		\item[\textup{(4)}] For any $m\ge0$, $\max_{I\in\mathcal{H}_{m}}\left|I\right|\le 2\chi\min_{I\in\mathcal{H}_{m}}\left|I\right|$.
	\end{enumerate}
\end{lem}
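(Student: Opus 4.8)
The plan is to build $\{H_{m}\}$ by \emph{slowing down} the passage from $E_{k-1}^{*}$ to $E_{k}^{*}$ in the reconstructed set $E(I_{0}^{*},\{n_{k}^{*}\},\{c_{k}^{*}\},\{\xi_{k,l}^{*}\})$: between these two levels I would insert finitely many intermediate closed sets, each a finite union of convex hulls of \emph{blocks} of consecutive $k$-order basic intervals of $E^{*}$. First I would fix once and for all an integer $M\in\mathbb{Z}^{+}$ with $M>2\chi$ (so $M\ge 3$, $M^{2}\ge 9$), which is the $M$ of item (3). Since $E^{*}$ is homogeneous, every branch $I_{\sigma}^{*}$ with $\sigma\in D_{k-1}$ is a translate of a fixed interval of length $\delta_{k-1}$ inside which sit $n_{k}$ basic intervals of length $\delta_{k}$, consecutive ones separated by gaps in $[\underline{\alpha}_{k}^{*},\overline{\alpha}_{k}^{*}]$ with $\overline{\alpha}_{k}^{*}\le\chi\underline{\alpha}_{k}^{*}$ by \eqref{rs2}; hence it suffices to describe the interpolation inside one $I_{\sigma}^{*}$ and apply it simultaneously over all $\sigma\in D_{k-1}$. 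A \emph{$c$-group} inside $I_{\sigma}^{*}$ will mean the convex hull of a run $I_{\sigma*a}^{*},\dots,I_{\sigma*(a+c-1)}^{*}$ of $c$ consecutive basic intervals; its length lies in $[\,c\delta_{k}+(c-1)\underline{\alpha}_{k}^{*},\ c\delta_{k}+(c-1)\overline{\alpha}_{k}^{*}\,]$. I would deliberately \emph{discard} the two outer gaps $\xi_{k,0}^{*},\xi_{k,n_{k}}^{*}$ of each $I_{\sigma}^{*}$ already at the first refinement step of the epoch; this removes no point of $E$ (as $E\cap I_{\sigma}^{*}\subseteq\bigcup_{j}I_{\sigma*j}^{*}$) and, as will be clear, is essential for (4).

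The combinatorial core is the choice of block sizes. For the $k$-th epoch: if $n_{k}\le(M^{2}+1)/2$, take $T_{k}=1$, so the epoch is the single step $E_{k-1}^{*}\to E_{k}^{*}$ splitting each $I_{\sigma}^{*}$ into its $n_{k}\le M^{2}$ basic intervals. Otherwise set $b_{0}=n_{k}$ and $b_{s+1}=\max\{2,\lceil 2b_{s}/M^{2}\rceil\}$, stopping at the first $T_{k}-1$ with $b_{T_{k}-1}\le(M^{2}+1)/2$; this terminates since, as long as $b_{s}>(M^{2}+1)/2$, $b_{s}$ strictly decreases (in fact up to an additive $1$ by a factor $\le 2/M^{2}\le 2/9$), while $b_{s}\ge 2$ for all $s$. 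Stage $0$ of the epoch is $E_{k-1}^{*}$; for $1\le j\le T_{k}-1$, stage $j$ partitions the $n_{k}$ basic intervals inside each $I_{\sigma}^{*}$ into consecutive groups whose sizes all lie in $[b_{j},2b_{j}-1]$, obtained by splitting each group of stage $j-1$ greedily from the left — a group of size $c\ge b_{j}$ splits into $\lfloor c/b_{j}\rfloor$ consecutive pieces of sizes in $[b_{j},2b_{j}-1]$, and since $c\le 2b_{j-1}-1$ while $b_{j}\ge 2b_{j-1}/M^{2}$ this is at most $M^{2}$ pieces; stage $T_{k}$ is $E_{k}^{*}$, obtained from stage $T_{k}-1$ by breaking each group — of size $\le 2b_{T_{k}-1}-1\le M^{2}$ — into its basic intervals. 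Putting $m_{k}=m_{k-1}+T_{k}$ yields (2); the $H_{m}$ are manifestly nested (hence length-decreasing), each a finite union of intervals with pairwise disjoint interiors, and $\bigcap_{m}H_{m}=\bigcap_{k}E_{k}^{*}=E$; item (3) is exactly the branching bound just recorded.

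It then remains to verify (4), which is where \eqref{rs2} and the design choices pay off. At a stage $j$ with $1\le j\le T_{k}-1$ every branch is a $c$-group with $c$ in the \emph{single} window $[b_{j},2b_{j}-1]$, $b_{j}\ge 2$; at $H_{m}=E_{k-1}^{*}$ and $H_{m}=E_{k}^{*}$ all branches share one length. So (4) reduces to the inequality
\[
\frac{c_{2}\delta_{k}+(c_{2}-1)\overline{\alpha}_{k}^{*}}{c_{1}\delta_{k}+(c_{1}-1)\underline{\alpha}_{k}^{*}}\le 2\chi\qquad(\,2\le b\le c_{1}\le c_{2}\le 2b-1\,).
\]
By \eqref{rs2} the left side is at most $\phi(\underline{\alpha}_{k}^{*})$, where $\phi(u)=\dfrac{c_{2}\delta_{k}+(c_{2}-1)\chi u}{c_{1}\delta_{k}+(c_{1}-1)u}$ is a linear fractional, hence monotone, function of $u\ge 0$; therefore $\phi(u)\le\max\{\,c_{2}/c_{1},\ (c_{2}-1)\chi/(c_{1}-1)\,\}$, and $c_{2}/c_{1}\le(2b-1)/b<2\le 2\chi$, while $(c_{2}-1)/(c_{1}-1)\le(2b-2)/(b-1)=2$. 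This gives (4) with the sharp constant $2\chi$.

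I expect the one real obstacle to be precisely this length-comparability (4): the construction must avoid the degenerate regime in which a gap dwarfs a basic interval. A stage containing both a lone basic interval and a two-interval block would violate (4) by an unbounded factor, and a stage containing a $b$-block alongside a $2b$-block would violate it by $\approx(2b-1)\chi/(b-1)>2\chi$. This is what forces the two design decisions above — never letting an intermediate block size drop below $2$ (so the last refinement jumps straight from blocks of $\ge 2$ intervals to singletons) and confining the sizes at each stage to a window of the form $[b_{j},2b_{j}-1]$ — and, together with discarding the outer gaps so that each block length has the clean form $c\delta_{k}+\sum(\text{internal gaps})$, it lets the monotone linear-fractional estimate close (4) exactly at the constant $2\chi$ appearing in the lemma.
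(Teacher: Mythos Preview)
Your proof is correct and follows the same overall architecture as the paper's: both insert intermediate levels $H_m$ between consecutive $E_{k-1}^{*}$ and $E_{k}^{*}$ by grouping the $n_k$ basic intervals of order $k$ inside each $I_{\sigma}^{*}$ into consecutive blocks, taking convex hulls, and discarding the two outer gaps $\xi_{k,0}^{*},\xi_{k,n_k}^{*}$ at the first refinement. The difference is purely in the combinatorics of the block sizes. The paper expands $n_k^{*}=a_0+a_1M+\cdots+a_{i_k-1}M^{i_k-1}+M^{i_k}$ in base $M=[2\chi]+1$ and at each intermediate step splits every block into exactly $M$ children whose sizes are $l_i$ or $l_i+1$ (differing by at most~$1$, with $l_i\ge M$); only the final step from $H_{m_k-1}$ to $H_{m_k}=E_k^{*}$ may produce up to $M^2$ children. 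Your scheme instead iterates $b_{s+1}=\max\{2,\lceil 2b_s/M^2\rceil\}$ and at stage $s$ confines block sizes to the window $[b_s,2b_s-1]$, allowing up to $M^2$ children at \emph{every} step. Both designs enforce the key feature that intermediate block sizes are always $\ge 2$, which is exactly what is needed for (4), and both deliver (3). Your linear-fractional argument for (4) is clean and makes fully explicit what the paper records only as ``easy to obtain''; the same monotonicity estimate with $c_1,c_2\in\{l_i,l_i+1\}$ closes the paper's version. In short: same strategy, different bookkeeping; your variant trades a slightly looser branching count (always $\le M^2$ rather than $=M$ at intermediate steps) for a simpler recursion and a more transparent proof of (4).
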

\begin{proof}
	Let $M=[2\chi]+1$. For any $k\ge 1$, let $i_{k}$ be the positive integer satisfying the following conditions: If $n_{k}^{*}\in[2,M)$, then $i_{k}=1$; If $n_{k}^{*}\in[M,+\infty)$, then $i_{k}$ is the positive integer satisfying $n_{k}^{*}\in[M^{i_{k}},M^{i_{k}+1})$. Define $m_{0}=0,\ m_{k}=\sum_{l=1}^{k}i_{l},$ then $m_k=m_{k-1}+i_k.$
	
	For any $k\ge 0$, define $H_{m_{k}}=E_{k}^{*}$  and  $\mathcal{H}_{m_{k}}=\{I_{\omega}^{\ast}:\omega\in D_{k}\}$, which means all branches of $H_{m_{k}}$ are all $k$-order basic intervals in $E_{k}^{*}$. Next,  we construct $H_{m}$ for any $k\ge 1$ and $m_{k-1}<m<m_{k}$,.

	\begin{enumerate}
		\item[\textup{(1):}] If $n_{k}^{*}\in[M,M^{2})$, then $i_{k}=1$ and $m_k=m_{k-1}+1$,  we have nothing  to do.
		\item[\textup{(2):}] If $n_{k}^{*}\in[M^{2},+\infty)$, then $i_{k}\ge 2$, and there are $a_{0}, a_{1},\cdot\cdot\cdot, a_{i_{k}-1}$, such that $a_{j}\in\left\{0,1,\cdots,M-1\right\}$  for any $j\in\left\{0,1,\cdots,i_{k}-1\right\}$ and
		\begin{equation*}
		n_{k}^{*}=a_{0}+a_{1}M+a_{2}M^{2}+\cdots+a_{i_{k}-1}M^{i_{k}-1}+M^{i_{k}}.
		\end{equation*}
	
		For any $k\ge1$ and $\sigma\in D_{k-1}$, since $H_{m_{k-1}}=E_{k-1}^{*}$, $H_{m_{k-1}}$ has $N_{k-1}^{*}$ branches and for any $I_{\sigma}^{*}\in \mathcal{H}_{m_{k-1}}$, the number of the $k$-order basic intervals in $E_{k}^{*}$ contained in $I_{\sigma}^{*}$ is $n_{k}^{*}$,  denote $I_{\sigma*1}^{*},\cdots,I_{\sigma*n_{k}^{*}}^{*}$ .

Now we   begin to construct $H_{m_{k-1}+i}$ for any  $1\le i\le i_k-1$.

Let $[T_{1},T_{2},\cdots,T_{t}]$ be the smallest closed interval containing the $t$ closed intervals $T_{1},T_{2},\cdots,T_{t}$.
\begin{enumerate}
\item[\textup{(a)}] For any $I_{\sigma}^{*}\in \mathcal{H}_{m_{k-1}}$, let $l_{1}=a_{1}+a_{2}M+\cdots+a_{i_{k}-1}M^{i_{k}-2}+M^{i_{k}-1}$, then $n_{k}^{*}=Ml_{1}+a_{0}=a_0(l_1+1)+(M-a_0)l_1$. Define

$$I_{1}^{\sigma,1}=[I_{\sigma*1}^{*},\cdots,I_{\sigma*(l_{1}+1)}^{*}],$$
$$I_{2}^{\sigma,1}=[I_{\sigma*(l_{1}+2)}^{*}\cdots I_{\sigma*(2l_{1}+2)}^{*}],$$
$$\cdots$$
$$I_{a_{0}}^{\sigma,1}=[I_{\sigma*((a_{0}-1)l_{1}+a_{0})}^{*}\cdots I_{\sigma*(a_{0}l_{1}+a_{0})}^{*}],$$
$$I_{a_{0}+1}^{\sigma,1}=[I_{\sigma*(a_{0}l_{1}+a_{0}+1)}^{*},\cdots,I_{\sigma*(a_{0}l_{1}+a_{0}+l_{1})}^{*}],$$
$$\cdots$$
$$I_{M}^{\sigma,1}=[I_{\sigma*(n_{k}^{*}+1-l_{1})}^{*},\cdots,I_{\sigma*n_{k}^{*}}^{*}].$$

Let $H_{m_{k-1}+1}=\bigcup_{\sigma\in D_{k-1}}\bigcup_{i=1}^{M}I_{i}^{\sigma,1}$, and let the $M$ closed intervals $I_{1}^{\sigma,1},\cdots,I_{M}^{\sigma,1}$ be the $M$ branches of $H_{m_{k-1}+1}$ in $I_{\sigma}^{*}$ , then each branch of $H_{m_{k-1}}$ contains $M$ branches of $H_{m_{k-1}+1}$ and  it is easy to obtain that $\max_{I\in\mathcal{H}_{m_{k-1}+1}}\left|I\right|\le 2\chi\min_{I\in\mathcal{H}_{m_{k-1}+1}}\left|I\right|$.

\item[\textup{(b)}] If $i_{k}=2$, then $m_{k}=m_{k-1}+2$, and $H_{m_{k-1}+1}$ is defined as above, $H_{m_{k-1}}=E_{k-1}^{*}$, $H_{m_{k}}=E_{k}^{*}$. This completes the construction of $H_{m_{k-1}+i}$  for any $1\le i\le i_k-1$.

\item[\textup{(c)}] if $i_{k}\ge3$, then we continue to construct  $H_{m_{k-1}+2}$. Let $l_{2}=a_{2}+a_{3}M+\cdots+a_{i_{k}-2}M^{i_{k}-3}+M^{i_{k}-2}$, then $l_{1}=Ml_{2}+a_{1}$, $n_{k}^{*}=M^{2}l_{2}+a_{1}M+a_{0}=a_0(Ml_2+a_1+1)+(M-a_0)(Ml_2+a_1)$.

For any $I^{\sigma,1}_{i}\in \mathcal{H}_{m_{k-1}+1}(\sigma\in D_{k-1}, 1\le i\le M)$, we divide our construction into the following two cases:

\textbf{(c1):} If $1\le i\le a_{0}$,  then the number of the $k$-order basic intervals contained in each $I^{\sigma,1}_{i}$ is $l_{1}+1=Ml_{2}+a_{1}+1=(l_2+1)(a_1+1)+l_2(M-a_1-1)$. Define
$$I_{i*1}^{\sigma,1}=[I_{\sigma*((i-1)l_{1}+i)}^{*},\cdots,I_{\sigma*((i-1)l_{1}+i+l_{2})}^{*}],$$
$$I_{i*2}^{\sigma,1}=[I_{\sigma*((i-1)l_{1}+i+l_{2}+1)}^{*},\cdots,I_{\sigma*((i-1)l_{1}+i+2l_{2}+1)}^{*}],$$
$$\cdots$$ $$I_{i*(a_{1}+1)}^{\sigma,1}=[I_{\sigma*((i-1)l_{1}+i+a_{1}l_{2}+a_{1})}^{*},\cdots,I_{\sigma*((i-1)l_{1}+i+(a_{1}+1)l_{2}+a_{1})}^{*}],$$
$$I_{i*(a_{1}+2)}^{\sigma,1}=[I_{\sigma*((i-1)l_{1}+i+(a_{1}+1)(l_{2}+1))}^{*},\cdots,I_{\sigma*((i-1)l_{1}+i+(a_{1}+1)(l_{2}+1)+l_{2}-1)}^{*}],$$
$$\cdots$$
$$I_{i*M}^{\sigma,1}=[I_{\sigma*(il_{1}+i+1-l_{2})}^{*},\cdots,I_{\sigma*(il_{1}+i)}^{*}].$$

\textbf{(c2):} If $a_{0}+1\le i\le M$, then the number of the $k$-order basic intervals contained in each $I^{\sigma,1}_{i}$ is $l_{1}=Ml_{2}+a_{1}=a_1(l_2+1)+(M-a_1)l_2$. Define
$$I_{i*1}^{\sigma,1}=[I_{\sigma*((i-1)l_{1}+a_{0}+1)}^{*},\cdots,I_{\sigma*((i-1)l_{1}+a_{0}+1+l_{2})}^{*}],$$
$$I_{i*2}^{\sigma,1}=[I_{\sigma*((i-1)l_{1}+a_{0}+l_{2}+2)}^{*},\cdots,I_{\sigma*((i-1)l_{1}+a_{0}+2l_{2}+2)}^{*}],$$
$$\cdots$$ $$I_{i*a_{1}}^{\sigma,1}=[I_{\sigma*((i-1)l_{1}+a_{0}+(a_{1}-1)l_{2}+a_{1})}^{*},\cdots,I_{\sigma*((i-1)l_{1}+a_{0}+a_{1}l_{2}+a_{1})}^{*}],$$
$$I_{i*(a_{1}+1)}^{\sigma,1}=[I_{\sigma*((i-1)l_{1}+a_{0}+a_{1}l_{2}+a_{1}+1)}^{*},\cdots,I_{\sigma*((i-1)l_{1}+a_{0}+a_{1}l_{2}+a_{1}+l_{2})}^{*}],$$
$$\cdots$$
$$I_{i*M}^{\sigma,1}=[I_{\sigma*(il_{1}+a_{0}+1-l_{2})}^{*},\cdots,I_{\sigma*(il_{1}+a_{0})}^{*}].$$

Let $H_{m_{k-1}+2}=\bigcup_{\sigma\in D_{k-1}}\bigcup_{i=1}^{M}\bigcup_{j=1}^{M}I_{i*j}^{\sigma,1}$, and let the $M$ closed intervals $I^{\sigma,1}_{i*1},I^{\sigma,1}_{i*2},\cdots, I^{\sigma,1}_{i*M}$  be the $M$ branches of $H_{m_{k-1}+2}$ in $I_{i}^{\sigma,1}$,  then each branch of $H_{m_{k-1}+1}$ contains $M$ branches of $H_{m_{k-1}+2}$ and  it is easy to obtain that $\max_{I\in\mathcal{H}_{m_{k-1}+2}}\left|I\right|\le 2\chi\min_{I\in\mathcal{H}_{m_{k-1}+2}}\left|I\right|$.

\item[\textup{(d)}] If $i_{k}=3$, then $m_{k}=m_{k-1}+3$, and $H_{m_{k-1}+1}$, $H_{m_{k-1}+2}$ are defined as above, $H_{m_{k-1}}=E_{k-1}^{*}$, $H_{m_{k}}=E_{k}^{*}$. This completes the construction of $H_{m_{k-1}+i}$ for any $1\le i\le i_k-1$.
\item[\textup{(e)}] If $i_k\ge 4$, then $m_k=m_{k-1}+i_k$. If $H_{m_{k-1}+i-1}(3\le i\le i_k-1)$ has been constructed, we repeat the method of the construction of $H_{m_{k-1}+i-1}$ from $H_{m_{k-1}+i-2}$ to define $H_{m_{k-1}+i}$ from $H_{m_{k-1}+i-1}$. Then $H_{m_{k-1}+1}, H_{m_{k-1}+2},\cdot\cdot\cdot H_{m_{k-1}+i_k-1}$ are defined, and we can obtain that for any $1\le i\le i_k-1$, each branch of $H_{m_{k-1}+i-1}$ contains $M$ branches of $H_{m_{k-1}+i}$ and  $\max_{I\in\mathcal{H}_{m_{k-1}+i}}\left|I\right|\le 2\chi\min_{I\in\mathcal{H}_{m_{k-1}+i}}\left|I\right|$. This completes the construction of $H_{m_{k-1}+i}$ for any $1\le i\le i_k-1$.

\item[\textup{(f)}] For any $i_{k}\ge2$, suppose $H_{m_{k-1}+i}$ has been constructed for any $1\le i\le i_{k}-1$. Notice that  each branch of $H_{m_{k-1}+i-1}\ (2\le i\le i_{k}-1)$ contains $M$ branches of $H_{m_{k-1}+i}$, thus each branch of $H_{m_{k-1}}$ contains $M^{i_{k}-1}$ branches of $H_{m_{k-1}+i_{k}-1}$. Notice that   $m_k=m_{k-1}+i_k$ and $H_{m_{k}}=E_{k}^{*}$ for any $k\ge 0$ implies that each branch of $H_{m_{k-1}}$ contains $n_k^{*}$ branches of $H_{m_{k}}$, then each branch of $H_{m_{k-1}+i_{k}-1}$ contains at most $M^{2}$ branches of $H_{m_{k}}$(otherwise, if there exists a branch of $H_{m_{k-1}+i_{k}-1}$ containing $M^{*}>M^{2}$ branches of $H_{m_{k}}$, then any branch of $H_{m_{k-1}+i_{k}-1}$ contains $M^{*}$ or $M^{*}+1$ or $M^{*}-1$ branches of $H_{m_{k}}$, which implies that $n_{k}^{*}>M^{2}\times M^{i_{k}-1}=M^{i_k+1}$, it is contrary to $M_{i_k}\le n_k^{*}<M^{i_k+1}$).
    \item[\textup{(g)}]  Since $H_{m_{k}}=E_{k}^{*}$ for any $k\ge 0$, we have $\max_{I\in\mathcal{H}_{m_{k}}}\left|I\right|= \min_{I\in\mathcal{H}_{m_{k}}}\left|I\right|$ for any $k \ge 0$.
\end{enumerate}
	\end{enumerate}

We finish the construction of $\{H_m\}_{m\ge0}$ which satisfies $(1)-(4)$ of Lemma \ref{lem4}.
\end{proof}
\bigskip

\begin{rem}
Without loss of generality, we  assume that $I_{0}^{*}=[0,1]$, then $H_{m_{0}}=E_{0}^{*}=[0,1]$ and $\delta_{0}=1$.
\end{rem}
\bigskip

\begin{lem}\label{lem5}
	Let $E=E(I_{0},\left\{n_{k}\right\},\left\{c_{k}\right\},\left\{\xi_{k,l}\right\})$ satisfies the condition of Theorem \text{\rm\ref{thm}}, $\left\{H_{m}\right\}_{m\ge0}$ be the length decreasing sequence  in Lemma \text{\rm\ref{lem4}} and the total length of all branches of each $H_{m}$ is denoted by $l(H_{m})$, then for any $k\ge 1$ and $m_{k-1}<m<m_{k}$,
    \begin{equation}
    l(H_{m_{k}})=N_{k}^{*}\delta_{k}^{*},\ (1-\frac{2\chi}{M})N_{k-1}^{*}\delta_{k-1}^{*}\le l(H_{m})\le N_{k-1}^{*}\delta_{k-1}^{*}\  .
    \end{equation}

\end{lem}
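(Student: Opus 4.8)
The plan is to split the statement into three independent pieces: the identity $l(H_{m_k})=N_k^*\delta_k^*$, the upper bound, and the lower bound. The identity is immediate: by construction $H_{m_k}=E_k^*$, so its branches are exactly the $k$-order basic intervals $\{I_\omega^*:\omega\in D_k\}$; there are $N_k^*=n_1^*\cdots n_k^*$ of them, each of length $|I_\omega^*|=\delta_k$, and $\delta_k=\delta_0c_1^*\cdots c_k^*=\delta_k^*$ since $c_j^*=\delta_j/\delta_{j-1}$ makes the product telescope. Hence $l(H_{m_k})=N_k^*\delta_k^*$. For the double inequality I would fix $k\ge1$ and $m$ with $m_{k-1}<m<m_k$, so that necessarily $i_k\ge2$ and $m=m_{k-1}+i$ for some $1\le i\le i_k-1$. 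The crucial reduction: since $H_m\subseteq H_{m_{k-1}}=E_{k-1}^*$, every branch of $H_m$ lies inside exactly one $(k-1)$-order basic interval $I_\sigma^*$ ($\sigma\in D_{k-1}$), so $l(H_m)=\sum_{\sigma\in D_{k-1}}\ell_\sigma$ with $\ell_\sigma$ the total length of the branches of $H_m$ contained in $I_\sigma^*$. As there are $N_{k-1}^*$ such $\sigma$ and $|I_\sigma^*|=\delta_{k-1}^*$, it suffices to prove $(1-\tfrac{2\chi}{M})\delta_{k-1}^*\le\ell_\sigma\le\delta_{k-1}^*$ for each $\sigma$ and then sum.

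The upper bound is immediate from Lemma \ref{lem4}(1): the branches of $H_m$ inside $I_\sigma^*$ have pairwise disjoint interiors and lie in $I_\sigma^*$, so $\ell_\sigma\le\delta_{k-1}^*$. For the lower bound I would read off from the construction of the $H_m$ in the proof of Lemma \ref{lem4} that the branches of $H_m$ inside $I_\sigma^*$ are exactly $M^i$ intervals $[I_{\sigma*a}^*,\dots,I_{\sigma*b}^*]$ which partition $I_{\sigma*1}^*,\dots,I_{\sigma*n_k^*}^*$ into $M^i$ consecutive blocks. Such a block has length $\sum_{j=a}^{b}|I_{\sigma*j}^*|+\sum_{j=a}^{b-1}\xi_{k,j}^*$; summing over the $M^i$ blocks and inserting $\delta_{k-1}^*=|I_\sigma^*|=\sum_{j=1}^{n_k^*}|I_{\sigma*j}^*|+\sum_{j=1}^{n_k^*-1}\xi_{k,j}^*+\xi_{k,0}^*+\xi_{k,n_k}^*$ yields
\[
\ell_\sigma=\delta_{k-1}^*-\xi_{k,0}^*-\xi_{k,n_k}^*-\sum_{s=1}^{M^i-1}\xi_{k,b_s}^*\ \ge\ \delta_{k-1}^*-\bigl(\xi_{k,0}^*+\xi_{k,n_k}^*\bigr)-(M^i-1)\,\overline{\alpha}_k^*,
\]
where $b_1<\dots<b_{M^i-1}$ are the right endpoints of the first $M^i-1$ blocks.

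It then remains to bound the deficit: I would show $\bigl(\xi_{k,0}^*+\xi_{k,n_k}^*\bigr)+(M^i-1)\overline{\alpha}_k^*\le\tfrac{2\chi}{M}\delta_{k-1}^*$. By \eqref{rs1}, \eqref{rs2} and $\chi\ge1$ the left-hand side is at most $\underline{\alpha}_k^*+(M^i-1)\chi\underline{\alpha}_k^*\le\chi M^i\underline{\alpha}_k^*$, and since $\delta_{k-1}^*=|I_\sigma^*|\ge\sum_{j=1}^{n_k^*-1}\xi_{k,j}^*\ge(n_k^*-1)\underline{\alpha}_k^*$ it is enough that $M^{i+1}\le 2(n_k^*-1)$; but $i+1\le i_k$ and the definition of $i_k$ give $M^{i+1}\le M^{i_k}\le n_k^*\le 2(n_k^*-1)$, the last step using $n_k^*\ge2$. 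Thus $\ell_\sigma\ge(1-\tfrac{2\chi}{M})\delta_{k-1}^*$, and summing over the $N_{k-1}^*$ intervals $I_\sigma^*$ finishes the proof. I expect the only genuinely delicate point to be the displayed identity for $\ell_\sigma$: one must verify that passing from $H_{m_{k-1}}$ to $H_{m_{k-1}+i}$ deletes from each $I_\sigma^*$ precisely the two end-gaps $\xi_{k,0}^*,\xi_{k,n_k}^*$ and exactly $M^i-1$ interior gaps — no more — which is exactly where the count of $M^i$ branches per $I_\sigma^*$ and the consecutiveness of the blocks supplied by Lemma \ref{lem4} enter; everything afterwards is the elementary inequality $M^{i+1}\le 2(n_k^*-1)$ weighed against $\delta_{k-1}^*\ge(n_k^*-1)\underline{\alpha}_k^*$.
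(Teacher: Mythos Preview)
Your proof is correct and follows essentially the same line as the paper's. The only cosmetic difference is that the paper bounds $l(H_{m_k-1})$ (the case $i=i_k-1$) and then invokes the monotonicity $l(H_m)\ge l(H_{m_k-1})$ for $m_{k-1}<m<m_k$, whereas you bound $l(H_{m_{k-1}+i})$ directly for each $1\le i\le i_k-1$; the gap count ($M^i-1$ interior gaps plus the two end gaps), the use of \eqref{rs1}--\eqref{rs2}, the inequality $M^{i_k}\le n_k^*\le 2(n_k^*-1)$, and the estimate $\delta_{k-1}^*\ge (n_k^*-1)\underline{\alpha}_k^*$ are identical in both arguments.
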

\begin{proof}
	Since $H_{m_{k}}=E_{k}^{*}$  for any $k\ge 1$ and $\left\{H_{m}\right\}_{m\ge0}$ is a length decreasing sequence, it is obvious that $l(H_{m_{k}})=l(E_{k}^{*})=N_{k}^{*}\delta_{k}^{*}$ and
	$l(H_{m})\le l(H_{m_{k-1}})=l(E_{k-1}^{*})=N_{k-1}^{*}\delta_{k-1}^{*}$ for any $k\ge 1$ and $m_{k-1}<m<m_{k}$. Then we only need to verify that $(1-\frac{2\chi}{M})N_{k-1}^{*}\delta_{k-1}^{*}\le l(H_{m})$ for any $k\ge 1$ and $m_{k-1}<m<m_{k}$.

	We can see a fact from the construction of $\left\{H_{m}\right\}_{m\ge0}$: In order to get $H_{m_{k}-1}$, we remove a half open and half closed interval of length $\xi_{k,0}^{*}$ and a half open and half closed interval of length  $\xi_{k, n_{k}}^{*}$ from each branch of $H_{m_{k-1}}$, and remove $[\sum_{j=0}^{i_{k}-2}M^{j}(M-1)]N_{k-1}^{*}=(M^{i_{k}-1}-1)N_{k-1}^{*}$ open intervals which the lengths of them are at most $\overline{\alpha}_{k}^{*}$ from  $E_{k-1}^{*}=H_{m_{k-1}}$. Then by (\ref{rs1}), we have
	\begin{equation*}
	\begin{aligned}
	l(H_{m_{k}-1})
	&\ge N_{k-1}^{*}\delta_{k-1}^{*}-N_{k-1}^{*}(\xi_{k,0}^{*}+\xi_{k, n_{k}}^{*})-(M^{i_{k}-1}-1)N_{k-1}^{*}\overline{\alpha}_{k}^{*}\\
	&\ge N_{k-1}^{*}\delta_{k-1}^{*}-N_{k-1}^{*}\underline{\alpha}_{k}^{*}-(M^{i_{k}-1}-1)N_{k-1}^{*}\overline{\alpha}_{k}^{*}\\
	&\ge N_{k-1}^{*}\delta_{k-1}^{*}-N_{k-1}^{*}\overline{\alpha}_{k}^{*}-(M^{i_{k}-1}-1)N_{k-1}^{*}\overline{\alpha}_{k}^{*}\\
	&\ge N_{k-1}^{*}\delta_{k-1}^{*}-M^{i_{k}-1}N_{k-1}^{*}\overline{\alpha}_{k}^{*}.\\
	\end{aligned}
	\end{equation*}
	Notice that $n_{k}^{*}\ge 2$ and
	\begin{equation*}
	M^{i_{k}}\le n_{k}^{*}< M^{i_{k}+1},
	\end{equation*}
then by (\ref{rs2}), we have
	\begin{equation*}
	\begin{aligned}
	l(H_{m_{k}-1})
	&\ge N_{k-1}^{*}\delta_{k-1}^{*}-\frac{n_{k}^{*}}{M}N_{k-1}^{*}\overline{\alpha}_{k}^{*}\\
	&\ge N_{k-1}^{*}\delta_{k-1}^{*}-\frac{2(n_{k}^{*}-1)}{M}N_{k-1}^{*}\overline{\alpha}_{k}^{*}\\
	&\ge 	N_{k-1}^{*}\delta_{k-1}^{*}-\frac{2}{M}N_{k-1}^{*}\chi(n_{k}^{*}-1)\underline{\alpha}_{k}^{*}\\
	&\ge N_{k-1}^{*}\delta_{k-1}^{*}-\frac{2\chi}{M}N_{k-1}^{*}\delta_{k-1}^{*}\\
	&\ge (1-\frac{2\chi}{M})N_{k-1}^{*}\delta_{k-1}^{*}.
	\end{aligned}
	\end{equation*}

Notice that $\left\{H_{m}\right\}_{m\ge0}$ is a length decreasing sequence, then $$ l(H_{m})\ge l(H_{m_{k}-1})\ge (1-\frac{2\chi}{M})N_{k-1}^{*}\delta_{k-1}^{*}$$ for any $k\ge 1$ and $m_{k-1}<m<m_{k}$.
\end{proof}

\bigskip

\section{The Quasisymmetric packing minimality on homogeneous perfect sets}

\subsection{The measure $\mu_{d}$}

Let $E=E(I_{0},\left\{n_{k}\right\},\left\{c_{k}\right\},\left\{\xi_{k,l}\right\})$ satisfies the condition of Theorem \ref{thm}, and $f$ be a \text{\rm1}-dimensional quasisymmetric mapping, $\left\{H_{m}\right\}_{m\ge0}$ be the length decreasing sequence in Lemma \ref{lem4}. In order to complete the proof of Theorem \ref{thm} by Lemma \ref{lem2}, we need to define a positive and finite Borel measure on $f(E)$.

For any $m\ge0$, let $J_{m}$ be the image set of a branch of $H_m$ under $f$, it is obvious that image sets of all branches of $H_m$ under $f$ constitute $f(H_m)$, we call $J_m$ a branch of $f(H_m)$. Let $J_{m,1}\cdots,J_{m,N(J_{m})}$ be all branches of $f(H_{m+1})$ contained in $J_{m}$, where $N(J_{m})$ is the number of the branches of $f(H_{m+1})$ contained in $J_{m}$, then $N(J_{m})\le M^{2}$.
For any $d\in(0,1)$, $m\ge0$ and $1\le i\le N(J_{m})$, by the measure extension theorem, there is a probability Borel measure $\mu_{d}$ on $f(E)$ satisfying
\begin{equation}
\mu_{d}(J_{m,i})=\frac{\left|J_{m,i}\right|^{d}}{\sum_{j=1}^{N(J_{m})}\left|J_{m,j}\right|^{d}}\mu_{d}(J_{m}).
\end{equation}
\bigskip

For any  $m\ge1$, let $k$  satisfies $m_{k-1}< m\le m_{k}$ , denote
$$\Lambda(m)=\max_{I\in\mathcal{H}_{m-1},~~J\in\mathcal{H}_{m},J\subset I}\frac{\left|J\right|}{\left|I\right|}, \lambda(m)=\min_{I\in\mathcal{H}_{m-1},J\in\mathcal{H}_{m},J\subset I}\frac{\left|J\right|}{\left|I\right|};$$
$$\Gamma(m)=\frac{\overline{\alpha}_{k}^{*}}{\min_{I\in\mathcal{H}_{m-1}}\left|I\right|},\ \gamma(m)=\frac{\underline{\alpha}_{k}^{*}}{\max_{I\in\mathcal{H}_{m-1}}\left|I\right|}.$$
By Lemma \ref{lem4}, we have
$$\lambda(m)\le\Lambda(m)\le 4\chi^{2}\lambda(m),\ \gamma(m)\le\Gamma(m)\le 2\chi^{2}\gamma(m),$$
and
\begin{equation}\label{52}
l(H_{m})\le \prod_{i=1}^{m}M^{2}\Lambda(i),\ l(H_{m})\le \prod_{i=1}^{m}(1-\gamma(i)).
\end{equation}
\bigskip

\bigskip

We have the following results.
\begin{lem}\label{lem6}
	If $\operatorname{dim}_{P}E=1$, then there exists a subsequence $\left\{a_{k}\right\}_{k\ge0}$ of  $\left\{m_{k}-1\right\}_{k\ge0}$, such that
	\begin{enumerate}
		\item[\textup{(1)}] $$\lim_{k \rightarrow\infty}(l(H_{a_{k}}))^{\frac{1}{a_{k}}}=1;$$
		\item[\textup{(2)}] Let $S_{\varepsilon}(m)=\left\{j:1\le j\le m, \gamma(j)\le \varepsilon\right\}$ for any $\varepsilon\in(0,1)$ and $m\ge1$, then
		\begin{equation*}
		\lim_{k \rightarrow\infty}\frac{\#S_{\varepsilon}(a_{k})}{a_{k}}=1,
		\end{equation*}
where the cardinality is denoted by $\#$;
		\item[\textup{(3)}] For any $p\in(0,1]$, we have
		\begin{equation}\label{53}
		\lim_{k \rightarrow\infty}(\prod_{j\in S_{\varepsilon}(a_{k})}(1-(\gamma(j))^{p}))^{\frac{1}{a_{k}}}=1
		\end{equation}
	for sufficiently small $\varepsilon\in(0,1)$.
	\end{enumerate}
\end{lem}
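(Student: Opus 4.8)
The plan is to reduce all three parts of Lemma \ref{lem6} to a single quantitative consequence of $\dim_P E=1$: that the total length $l(H_m)$ does not decay exponentially along a suitable subsequence.

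\emph{Reformulating the dimension hypothesis.} By the reconstruction in Section 4, $l(H_{m_k})=l(E_k^{*})=N_k^{*}\delta_k^{*}$ and $\delta_k^{*}=\delta_k=c_1\cdots c_k-\xi_{k+1,0}-\xi_{k+1,n_{k+1}}$, so $\delta_k^{*}/n_{k+1}=l(H_{m_k})/N_{k+1}^{*}$. Substituting this into the formula of Lemma \ref{lem1} (which applies since $E$ satisfies the hypothesis of Theorem \ref{thm}) gives
\[
\dim_P E=\varlimsup_{k\to\infty}\frac{\log N_{k+1}^{*}}{-\log l(H_{m_k})+\log N_{k+1}^{*}}.
\]
Every term is at most $1$ because $l(H_{m_k})=l(E_k^{*})\le 1$; hence $\dim_P E=1$ forces a subsequence $\{k_j\}$ with $-\log l(H_{m_{k_j}})=o(\log N_{k_j+1}^{*})$, and $\log N_{k_j+1}^{*}\to\infty$ since $N_{k+1}^{*}\ge 2^{k+1}$.

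\emph{Proof of (1).} Put $a_j=m_{k_j+1}-1$, which is a subsequence of $\{m_k-1\}_{k\ge0}$. From the definition of the integers $i_l$ in Lemma \ref{lem4} one has $n_l^{*}<M^{i_l+1}$ for every $l$, so $N_{k+1}^{*}<M^{m_{k+1}+(k+1)}\le M^{2m_{k+1}}$ and therefore
\[
a_j=m_{k_j+1}-1\ge\frac{\log N_{k_j+1}^{*}}{2\log M}-1\longrightarrow\infty .
\]
On the other hand $m_{k_j}\le a_j<m_{k_j+1}$, so Lemma \ref{lem5} (applied with $k=k_j+1$) gives $l(H_{a_j})\ge(1-\tfrac{2\chi}{M})\,l(H_{m_{k_j}})$; since $M>2\chi$, with $C_0=-\log(1-\tfrac{2\chi}{M})>0$ we obtain
\[
0\le\frac{-\log l(H_{a_j})}{a_j}\le\frac{C_0-\log l(H_{m_{k_j}})}{\tfrac{1}{2\log M}\log N_{k_j+1}^{*}-1}\longrightarrow 0,
\]
using $-\log l(H_{m_{k_j}})=o(\log N_{k_j+1}^{*})$. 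After reindexing this is (1).

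\emph{Proof of (2) and (3).} From \eqref{52}, $-\log l(H_m)\ge-\sum_{i=1}^{m}\log(1-\gamma(i))\ge\sum_{i=1}^{m}\gamma(i)$, hence $\#\{j\le m:\gamma(j)>\varepsilon\}\le\varepsilon^{-1}\bigl(-\log l(H_m)\bigr)$ for every $\varepsilon\in(0,1)$. Taking $m=a_k$, dividing by $a_k$ and using (1) gives $\#S_\varepsilon(a_k)/a_k\to1$, which is (2). For (3), fix $p\in(0,1]$ and $\varepsilon\in(0,1)$; for $j\in S_\varepsilon(a_k)$ we have $\gamma(j)^{p}\le\varepsilon^{p}<1$, so $-\log(1-\gamma(j)^{p})\le C_1\gamma(j)^{p}$ with $C_1=\tfrac{-\log(1-\varepsilon^{p})}{\varepsilon^{p}}$, whence
\[
0\le -\frac1{a_k}\log\prod_{j\in S_\varepsilon(a_k)}(1-\gamma(j)^{p})\le\frac{C_1}{a_k}\sum_{j\in S_\varepsilon(a_k)}\gamma(j)^{p}.
\]
Given $\eta\in(0,\varepsilon]$, split the last sum according to whether $\gamma(j)\le\eta$ (contributing at most $a_k\eta^{p}$) or $\gamma(j)>\eta$ (at most $\eta^{-1}(-\log l(H_{a_k}))$ terms, each $\le1$); so it is at most $a_k\eta^{p}+\eta^{-1}(-\log l(H_{a_k}))$, and dividing by $a_k$ and letting $k\to\infty$ via (1) yields $\limsup_k\tfrac1{a_k}\sum_{j\in S_\varepsilon(a_k)}\gamma(j)^{p}\le\eta^{p}$. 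Since $\eta$ is arbitrary this limsup is $0$, giving (3).

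The crux, and the step I expect to be the main obstacle, is combining the reformulation with the right choice of auxiliary subsequence: $\dim_P E=1$ only bounds $-\log l(H_{m_{k_j}})$ against $\log N_{k_j+1}^{*}$, not against $\log N_{k_j}^{*}$, so one is forced to work at the level $a_j=m_{k_j+1}-1$; this is legitimate precisely because Lemma \ref{lem5} keeps $l(H_m)$ within a fixed factor of $l(H_{m_{k_j}})$ across the whole block $m_{k_j}\le m<m_{k_j+1}$, while $m_{k_j+1}-1$ is already of order $\log N_{k_j+1}^{*}$ by the construction of the $i_l$ in Lemma \ref{lem4}.
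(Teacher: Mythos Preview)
Your proof is correct and follows essentially the same route as the paper's: reformulate $\dim_P E=1$ via Lemma~\ref{lem1} as $-\log l(H_{m_{k_j}})=o(\log N_{k_j+1}^{*})$, compare $m_{k+1}-1$ to $\log N_{k+1}^{*}$ via $N_{k+1}^{*}<M^{2m_{k+1}}$, invoke Lemma~\ref{lem5} to pass to $l(H_{m_{k_j+1}-1})$, and then derive (2) and (3) from $l(H_m)\le\prod_{i\le m}(1-\gamma(i))$. The only genuine difference is in part~(3): the paper obtains $\tfrac{1}{a_k}\sum_{j\le a_k}\gamma(j)^{p}\to 0$ in one line from Jensen's inequality (concavity of $x\mapsto x^{p}$) applied to $\tfrac{1}{a_k}\sum_{j}\gamma(j)\to 0$, whereas you substitute a threshold-splitting argument at a second level $\eta\le\varepsilon$. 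Both are elementary; Jensen is shorter, but your version has the mild advantage of yielding (3) for every $\varepsilon\in(0,1)$ rather than only sufficiently small ones.
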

\begin{proof}
	(1) By $\operatorname{dim}_{P}E=1$ and Lemma \ref{lem1}, we have
	\begin{equation*}
	\varlimsup_{k \rightarrow \infty}\frac{\log_{M} n_{1}\cdots n_{k}n_{k+1}}{-\log_{M}\frac{c_1c_2\cdots c_k-\xi_{k+1, 0}-\xi_{k+1, n_{k+1}}}{n_{k+1}}}=1.
	\end{equation*}

    Notice that $$N_{k+1}^{*}=n_1\cdots n_kn_{k+1}, \delta_{k}^{*}=c_1\cdots c_k-\xi_{k+1, 0}-\xi_{k+1, n_{k+1}}, n_{k+1}^{*}=n_{k+1},$$
	then
	\begin{equation*}
	\varlimsup _{k \rightarrow \infty}\frac{\log_{M} N_{k}^{*}}{\log_{M}N_{k}^{*}-\log_{M}N_{k-1}^{*}\delta_{k-1}^{*}}=\varlimsup _{k \rightarrow \infty}\frac{\log_{M} N_{k+1}^{*}}{-\log_{M}\frac{\delta_{k}^{*}}{n_{k+1}^{*}}}=1,
	\end{equation*}
which implies that
	\begin{equation*}
	\varlimsup _{k \rightarrow \infty}(N_{k-1}^{*}\delta_{k-1}^{*})^{\frac{1}{\log_{M}N_{k}^{*}}}=1.
	\end{equation*}
	
	Since $M^{i_{k}}\le n_{k}^{*}<M^{i_{k}+1}$, $m_{k}=i_{1}+\cdots+i_{k}$, we have $N_{k}^{*}=n_{1}^{*}\cdots n_{k}^{*}<M^{m_{k}+k}\le M^{2m_{k}}$,
	then $\log_{M}N_{k}^{*}\le 2m_{k}$. Notice that $m_k\ge 2$, then we have $m_{k}-1\ge \frac{m_{k}}{2}\ge \frac{\log_{M}N_{k}^{*}}{4}$. By Lemma \ref{lem5}, we have
	\begin{equation*}
	\begin{aligned}
	\varlimsup_{k \rightarrow \infty}(l(H_{m_{k}-1}))^{\frac{1}{m_{k}-1}}
	&\ge \varlimsup_{k \rightarrow \infty}[(1-\frac{2\chi}{M})(N_{k-1}^{*}\delta_{k-1}^{*})]^{\frac{1}{m_{k}-1}}\\
	&\ge \varlimsup_{k \rightarrow \infty}[(1-\frac{2\chi}{M})(N_{k-1}^{*}\delta_{k-1}^{*})]^{\frac{4}{\log_{M}N_{k}^{*}}}\\
	&=1.
	\end{aligned}
	\end{equation*}

	Notice that
	$$\varlimsup_{k \rightarrow \infty}(l(H_{m_{k}-1}))^{\frac{1}{m_{k}-1}}\le\varlimsup_{k \rightarrow \infty} (l(H_{m_{k}-1}))^{0}=1,$$
	then
	$$\varlimsup_{k \rightarrow \infty}(l(H_{m_{k}-1}))^{\frac{1}{m_{k}-1}}=1.$$
	Therefore, there exists a subsequence $\left\{a_{k}\right\}_{k\ge0}$ of $\left\{m_{k}-1\right\}_{k\ge0}$, such that
	$$\lim_{k \rightarrow\infty}(l(H_{a_{k}}))^{\frac{1}{a_{k}}}=1.$$
	
	(2) Since $l(H_{m})\le \prod_{j=1}^{m}(1-\gamma(j))$ for any  $m\ge1$,  we have $l(H_{a_{k}})\le \prod_{j=1}^{a_{k}}(1-\gamma(j))$, thus
	$$(l(H_{a_{k}}))^{\frac{1}{a_{k}}}\le (\prod_{j=1}^{a_{k}}(1-\gamma(j)))^{\frac{1}{a_{k}}}\le 1-{\frac{1}{a_{k}}}\sum_{j=1}^{a_{k}}\gamma(j).$$
	Let $k\to\infty$, combining the result of (1), we obtain that $1\le 1-\lim_{k \rightarrow\infty}{\frac{1}{a_{k}}}\sum_{j=1}^{a_{k}}\gamma(j)$, which implies that
	\begin{equation}\label{54}
	\lim_{k\rightarrow\infty}{\frac{1}{a_{k}}}\sum_{j=1}^{a_{k}}\gamma(j)=0.
	\end{equation}

    Notice that
	$$\frac{a_{k}-\#S_{\varepsilon}(a_{k})}{a_{k}}=\frac{\#\{i:1\le i\le a_{k}, \gamma(i)>\varepsilon\}}{a_{k}}\le \frac{1}{a_{k}\varepsilon}\sum_{i=1}^{a_{k}}\gamma(i),$$
	by the equality (\ref{54}), we get
	\begin{equation*}
	\lim_{k \rightarrow\infty}\frac{\# S_{\varepsilon}(a_{k})}{a_{k}}=1.
	\end{equation*}
	
	(3) For any $p\in(0,1]$, by the Jensen's inequality, we have
	\begin{equation*}
	\frac{1}{a_{k}}\sum_{j=1}^{a_{k}}(\gamma(j))^{p}\le(\frac{1}{a_{k}}\sum_{j=1}^{a_{k}}\gamma(j))^{p},
	\end{equation*}
	combining the equality (\ref{54}), we have
	\begin{equation}\label{55}
	\lim_{k \rightarrow\infty}\frac{1}{a_{k}}\sum_{j=1}^{a_{k}}(\gamma(j))^{p}=0.
	\end{equation}

	Since $\log(1-x)\ge -2x$ for any $x\in(0,\frac{1}{2})$, we get
	\begin{equation*}
	\begin{aligned}
	\log(\prod_{j\in S_{\varepsilon}(a_{k})}(1-(\gamma(j))^{p}))^{\frac{1}{a_{k}}}
	&=\frac{1}{a_{k}}\sum_{j\in S_{\varepsilon}(a_{k})}\log(1-(\gamma(j))^{p})\\
	&\ge \frac{-2}{a_{k}}\sum_{j\in S_{\varepsilon}(a_{k})}(\gamma(j))^{p}\\
	&\ge \frac{-2}{a_{k}}\sum_{j=1}^{a_{k}}(\gamma(j))^{p}\\
	\end{aligned}
	\end{equation*}
for sufficiently small $\varepsilon\in(0,1)$.
	Together with (\ref{55}), we obtain that
	\begin{equation*}
	\lim_{k \rightarrow\infty}(\prod_{j\in S_{\varepsilon}(a_{k})}(1-(\gamma(j))^{p}))^{\frac{1}{a_{k}}}=1.
	\end{equation*}
\end{proof}

\bigskip
\subsection{The estimate of $\mu_{d}$}
 Let $E=E(I_{0},\left\{n_{k}\right\},\left\{c_{k}\right\},\left\{\xi_{k,l}\right\})$ satisfies the condition of Theorem \ref{thm} with $\operatorname{dim}_{P}E=1$, $f$ be a \text{\rm1}-dimensional quasisymmetric mapping, $\left\{H_{m}\right\}_{m\ge0}$ be the length decreasing sequence in Lemma \ref{lem4} and  $\{a_k\}_{k\ge 1}$ be the sequence in Lemma \ref{lem6}. For using Lemma \ref{lem2} to prove Theorem \ref{thm}, we first estimate $\mu_d(J)$ for any branch $J$ of $f(H_{a_k})$ for any $k\ge 1$.

\begin{proposition}\label{prop}
	For any $k\ge 1$, any branch of $f(H_{a_k})$, denoted by $J$, there is a positive constant $C($independent of $J)$ satisfying $\mu_{d}(J)\le C\left|J\right|^{d}$ for any $d\in(0,1)$.
\end{proposition}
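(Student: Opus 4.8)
The plan is to induct on the levels $a_0 < a_1 < \cdots$ and, more finely, on the intermediate levels $m$ between consecutive $a_k$'s, tracking the ratio $\mu_d(J)/|J|^d$ along the tree of branches. Write $R(J) = \mu_d(J)/|J|^d$ for a branch $J$ of some $f(H_m)$. The defining relation for $\mu_d$ gives, for a branch $J_{m,i}$ of $f(H_{m+1})$ inside a branch $J_m$ of $f(H_m)$,
\begin{equation*}
R(J_{m,i}) = \frac{\mu_d(J_m)}{\sum_{j=1}^{N(J_m)} |J_{m,j}|^d} = R(J_m)\cdot\frac{|J_m|^d}{\sum_{j=1}^{N(J_m)}|J_{m,j}|^d}.
\end{equation*}
So the multiplicative increment in passing from level $m$ to $m+1$ is $|J_m|^d \big/ \sum_j |J_{m,j}|^d$, and I need to bound the product of these increments from $H_{m_0}=[0,1]$ (where $\mu_d$ of the whole space is $1$ and $|f(I_0^*)|$ is a fixed constant, so $R$ starts bounded) up to level $a_k$. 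The whole problem reduces to showing $\prod_{m=1}^{a_k} \big( |J_{m-1}|^d / \sum_j |J_{m,j}|^d \big)$ stays bounded above by a constant independent of $k$ and of the chosen branch, uniformly in $d\in(0,1)$.

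The key estimate at a single step is a lower bound for $\sum_{j=1}^{N(J_{m})} |J_{m,j}|^d$ in terms of $|J_m|^d$. Here is where Lemma \ref{lem3} and the structure from Lemma \ref{lem4} enter. Inside one branch $I$ of $H_{m-1}$ (a real interval), the branches of $H_m$ are separated by $N(J_m)-1$ gaps of comparable size (lengths between $\underline{\alpha}_k^*$ and $\overline{\alpha}_k^* \le \chi\underline{\alpha}_k^*$, together with the two end-gaps controlled by \eqref{rs1}), and the branches themselves have comparable lengths by item (4) of Lemma \ref{lem4}. Using Lemma \ref{lem3}, I can compare $|f(I')|/|f(I)|$ with a power of $|I'|/|I|$: from the upper bound $|f(I_{m,j})|/|f(I)| \le 4(|I_{m,j}|/|I|)^p$ and the fact that the pre-images $I_{m,j}$ and the gaps fill up $I$, I get that each gap maps to an interval of length comparable (via $K_\rho$ and the quasisymmetry inequality applied to the gap as a bounded dilate of a neighbouring branch) to $|f(I)|\cdot(\text{something like }\gamma(m))$ up to a power; and each branch $J_{m,j}$ has $|J_{m,j}| \ge \beta(|I_{m,j}|/|I|)^q |f(I)| \ge \beta(\lambda(m))^q|J|$. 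The upshot should be an inequality of the form
\begin{equation*}
\sum_{j=1}^{N(J_m)}|J_{m,j}|^d \ge |J_m|^d \cdot c\,\big(1-(\Gamma(m))^{p}\big)^{C}
\end{equation*}
or similar, perhaps more naturally phrased as: the total length removed in $f$-image at step $m$, relative to $|J_m|$, is at most $C(\gamma(m))^{p'}$ for a fixed power $p'$ depending on $p,q,\chi$, so that $\sum_j |J_{m,j}|^d \ge (1-C(\gamma(m))^{p'})^d |J_m|^d$ when $d\le 1$ by convexity/subadditivity of $x\mapsto x^d$. (One has to be a little careful to separate the steps $m$ with $m_{k-1}<m<m_k$, where $N(J_m)=M$ and only $M-1$ small gaps are created, from the "jump" steps where $N(J_m)$ can be as large as $M^2$; but in all cases the number of gaps is $O(M^2)$ and their relative sizes are $O(\gamma(m))$.)

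Granting this, the increment at step $m$ is at most $\big(1 - c(\gamma(m))^{p'}\big)^{-d} \le \big(1-c(\gamma(m))^{p'}\big)^{-1}$. Taking the product over $m=1,\dots,a_k$ and splitting according to $S_\varepsilon(a_k) = \{j\le a_k : \gamma(j)\le\varepsilon\}$: on the complement, which by Lemma \ref{lem6}(2) has size $o(a_k)$, each factor is at most a fixed constant $(1-c\varepsilon^{p'})^{-1}$ wait—that bound is the wrong direction; instead on the complement I bound each increment crudely by $2$ (say), contributing $2^{a_k - \#S_\varepsilon(a_k)} = 2^{o(a_k)}$, and on $S_\varepsilon(a_k)$ I use $\prod_{j\in S_\varepsilon(a_k)}(1-c(\gamma(j))^{p'})^{-1}$, whose $a_k$-th root tends to $1$ by Lemma \ref{lem6}(3) with $p=p'$. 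Hence $R(J)^{1/a_k}\to 1$, and more precisely $R(J) \le e^{o(a_k)}$ — but I actually need a \emph{constant} bound, not $e^{o(a_k)}$.

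\textbf{The main obstacle.} The difficulty is exactly the last point: Lemma \ref{lem6} only gives subexponential ($e^{o(a_k)}$) control, whereas Proposition \ref{prop} demands a \emph{uniform constant} $C$. This means the crude "bound each bad increment by $2$" step is too lossy, and the argument must instead exploit that $\sum_m \gamma(m)$ — not just $\frac{1}{a_k}\sum_{m\le a_k}\gamma(m)$ — has controlled growth, via the relation \eqref{52} $l(H_m)\le\prod_i(1-\gamma(i))$ combined with the lower bound $l(H_{a_k}) = l(H_{m_k-1}) \ge (1-\tfrac{2\chi}{M})N_{k-1}^*\delta_{k-1}^*$ from Lemma \ref{lem5}, which bounds $\prod_{i\le a_k}(1-\gamma(i))$ \emph{below by a constant times} $N_{k-1}^*\delta_{k-1}^*$. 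The actual proof must therefore relate $\prod_j(1-c(\gamma(j))^{p'})^{-d}$ (the $\mu_d$-increment product) to a power of $\prod_j(1-\gamma(j))^{-1}$ and to $N_{k-1}^*\delta_{k-1}^*$, and then use that $N_{k-1}^*\delta_{k-1}^* = l(E_{k-1}^*)\le 1$ together with the packing-dimension-$1$ hypothesis to absorb everything into a constant — in effect running a more careful version of the Lemma \ref{lem6}(1) computation but keeping the constant $(1-\tfrac{2\chi}{M})$ honest rather than passing to $a_k$-th roots. Handling the interplay of the two exponents $p,q$ from Lemma \ref{lem3} (the gap images want the $p$-estimate, the branch images want the $q$-estimate) while keeping $d$ uniform is the delicate bookkeeping; I expect the proof to isolate a single power $p_0 = p/q$ or so and phrase the whole step-increment bound in terms of $\gamma(m)^{p_0}$.
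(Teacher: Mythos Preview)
Your setup is right: the telescoping product \eqref{56} is exactly how the paper starts, and the split of indices into $S_\varepsilon(a_k)$ and its complement is the right move. But your core estimate is structurally too weak, and your diagnosis of the obstacle points in the wrong direction.

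The bound you aim for, $\sum_j |J_{m,j}|^d \ge (1 - C\gamma(m)^{p'})^d |J_m|^d$, is always at most $|J_m|^d$. With it, every increment factor $|J_m|^d/\sum_j|J_{m,j}|^d$ is at least $1$, so no amount of averaging or regrouping can make the product bounded---you are only ever proving $\mu_d(J)/|J|^d \ge 1$, and any upper bound you extract will indeed grow like $e^{o(a_k)}$. Your proposed fix (upgrading the $\frac{1}{a_k}\sum\gamma(m)\to 0$ statement to a bound on $\sum\gamma(m)$ itself) would not help even if it were available, because the problem is not the size of the ``bad'' contribution but the absence of any ``good'' gain.

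The missing idea is that for $d<1$ the ratio $\sum_j|J_{m,j}|^d/|J_m|^d$ is in fact \emph{strictly larger than $1$}, by a uniform factor $\kappa>1$, whenever $(m{+}1)\in S_\varepsilon(a_k)$. Concretely, the paper uses the elementary inequality
\[
\frac{1+x_1^d+\cdots+x_n^d}{(1+x_1+\cdots+x_n)^d}\ \ge\ (1+\max_i x_i)^{1-d}
\]
together with a lower bound $|J_{m,j}|/\max_i|J_{m,i}|\ge \beta\big(\tfrac{1-2\chi^2M^2\varepsilon}{2\chi M^2}\big)^q$ (from Lemma~\ref{lem3} and the branch comparability in Lemma~\ref{lem4}) to get $\sum_j|J_{m,j}|^d/(\sum_j|J_{m,j}|)^d\ge\kappa>1$, and then combines this with your length estimate $\sum_j|J_{m,j}|\ge(1-O(\gamma(m{+}1)^p))|J_m|$. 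The product over good indices is then at least $\kappa^{\#S_\varepsilon(a_k)}$ times the product in Lemma~\ref{lem6}(3); the product over bad indices is handled by the crude bound~\eqref{59} involving $(\Lambda(j{+}1))^{dq}$, which after multiplying over all $j$ is controlled by $(l(H_{a_k}))^{dq}$ via~\eqref{52}. Taking $a_k$-th roots and invoking Lemma~\ref{lem6}(1)--(3) gives $\varliminf_k\big(\prod_j\cdots\big)^{1/a_k}\ge\kappa>1$, so the product itself tends to $\infty$ and its reciprocal $\mu_d(J)/|J|^d$ is bounded.

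One minor point: the constant $C$ is allowed to depend on $d$ (indeed $\kappa\to 1$ as $d\to 1$), so your worry about uniformity in $d$ is unnecessary; $d$ is fixed throughout and only sent to $1$ at the very end of the proof of Theorem~\ref{thm}.
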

\begin{proof}
	For any $d\in(0,1)$ and $k\ge 1$, let $J=J_{a_k}$ be a branch of $f(H_{a_{k}})$. For any $0\le j\le a_{k}-1$, let $J_{j}$ be a branch of $f(H_{j})$ satisfying
	$$J=J_{a_{k}}\subset J_{a_{k}-1}\subset\cdots\subset J_{1}\subset J_{0}=f(I_{0}^{*}).$$
	With loss of generality, suppose $J_{0}=I_{0}^{*}=[0,1]$. By the definition of $\mu_{d}$, it is obvious that
	$$\mu_{d}(J_{a_{k}})=\frac{\left|J_{a_{k}}\right|^{d}}{\sum_{i=1}^{N(J_{a_{k}}-1)}\left|J_{a_{k}-1,i}\right|}\cdot\frac{\left|J_{a_{k}-1}\right|^{d}}{\sum_{i=1}^{N(J_{a_{k}}-2)}\left|J_{a_{k}-2,i}\right|}\cdots \frac{\left|J_{1}\right|^{d}}{\sum_{i=1}^{N(J_{0})}\left|J_{0,i}\right|}\left|J_{0}\right|^{d}.$$
	Thus
	\begin{equation}\label{56}
	\frac{\mu_{d}(J_{a_{k}})}{\left|J_{a_{k}}\right|^{d}}=\prod_{j=0}^{a_{k}-1}\frac{\left|J_{j}\right|^{d}}{\sum_{i=1}^{N(J_{j})}\left|J_{j,i}\right|^{d}}.
	\end{equation}
	
	We start to estimate $\frac{\sum_{i=1}^{N(J_{j})}\left|J_{j,i}\right|^{d}}{\left|J_{j}\right|^{d}}\ $ for any $0\le j\le a_{k}-1$. For any $k\ge 0$, $\sigma\in D_{k}$, let $L_{\sigma*0}=[\min(I_{\sigma}^{*}),\min(I_{\sigma*1}^{*}))$, $L_{\sigma*n_{k}^{*}}=(\max(I_{\sigma*n_{k}^{*}}^{*}),\max(I_{\sigma}^{*})]$. For any $0\le j\le a_{k}-1$, let $J_{j,1},J_{j,2},\cdots, J_{j,N(_{J_{j}})}\subset J_{j}\subset f(H_{j})$ be branches of $f(H_{j+1})$ located from left to right in $J_{j}$ and $G_{j,1},\cdots,G_{j,N(J_{j})-1}$ be gaps between $J_{j,1},J_{j,2},\cdots, J_{j,N(_{J_{j}})}$. Let
	$$I_{j}=f^{-1}(J_{j}),\ I_{j,l}=f^{-1}(J_{j,l})\ (\forall 1\le l\le N(J_{j}));$$
	$$L_{j,l}=f^{-1}(G_{j,l})\ (\forall 1\le l\le N(J_{j})-1).$$
	It is obvious that
	\begin{enumerate}
		\item[\textup{(1)}] $I_{j}=f^{-1}(J_{j})$ is a branch of $H_{j}$ and any $I_{j,l}\subset I_{j}$ is a branch of $H_{j+1}$;
		\item[\textup{(2)}] $L_{j,1},\cdots,L_{j,N(J_{j})-1}$ are gaps between $I_{j,1},I_{j,2},\cdots,I_{j,N(J_{j})}$.
	\end{enumerate}
    By Lemma \ref{lem3}, we have
    \begin{equation}\label{57}
    \frac{\left|J_{j,l}\right|}{\left|J_{j}\right|}\ge \beta(\frac{\left|I_{j,l}\right|}{\left|I_{j}\right|})^{q}\ge \beta(\lambda(j+1))^{q}\ge (4\chi^{2})^{-q}\beta(\Lambda(j+1))^{q},
    \end{equation}
    \begin{equation}\label{58}
     \frac{\left|G_{j,l}\right|}{\left|J_{j}\right|}\le 4(\frac{\left|L_{j,l}\right|}{\left|I_{j}\right|})^{p}\le 4(\Gamma(j+1))^{p}\le4(2\chi^{2}\gamma(j+1)))^{p}\le 8\chi^{2}(\gamma(j+1))^{p}.
    \end{equation}
    It follows from (\ref{57}) that
    \begin{equation}\label{59}
    \frac{\sum_{i=1}^{N(J_{j})}\left|J_{j,i}\right|^{d}}{\left|J_{j}\right|^{d}}
    \ge (4\chi^{2})^{-dq}\beta^{d}(\Lambda(j+1))^{dq}.
    \end{equation}

    \bigskip
    Next we do another estimation of $\frac{\sum_{i=1}^{N(J_{j})}\left|J_{j,i}\right|^{d}}{\left|J_{j}\right|^{d}}$ for any $(j+1)\in S_{\varepsilon}(a_{k})$ with sufficiently small $\varepsilon\in(0,1)$.

    Let $k\ge 1$ satisfies $m_{k-1}\le j<m_{k}$ and $I_{j}\subset I_{\sigma}^{\ast}(\sigma \in D_{k-1})$, together $N(J_{j})\le M^{2}$ with (\ref{58}), we have
   \begin{equation*}
   \begin{aligned}
   \frac{\sum_{i=1}^{N(J_{j})}\left|J_{j,i}\right|}{\left|J_{j}\right|}
   &\ge \frac{\left|J_{j}\right|-\sum_{l=1}^{N(J_{j})-1}\left|G_{j,l}\right|-\left|f(L_{\sigma*0})\right|-|f(L_{\sigma*n_{k}^{*}})|}{\left|J_{j}\right|}\\
   &\ge 1-8M^{2}\chi^{2}(\gamma(j+1))^{p}-4((\frac{\xi_{k,0}^{*}}{\left|I_{j}\right|})^{p}+(\frac{\xi_{k,n_{k}}^{*}}{\left|I_{j}\right|})^{p}).\\
   \end{aligned}
   \end{equation*}
   Notice that $\xi_{k,0}^{*}+\xi_{k,n_{k}}^{*}\le \underline{\alpha}_{k}^{*}\le\overline{\alpha}_{k}^{*},$ then
   \begin{equation}\label{510}
   \begin{aligned}
   \frac{\sum_{i=1}^{N(J_{j})}\left|J_{j,i}\right|}{\left|J_{j}\right|}
   &\ge 1-8M^{2}\chi^{2}(\gamma(j+1))^{p}-8(\frac{\left|\overline{\alpha}_{k}^{*}\right|}{\left|I_{j}\right|})^{p}\\
   &\ge 1-8M^{2}\chi^{2}(\gamma(j+1))^{p}-8(\Gamma(j+1))^{p}\\
   &\ge 1-8M^{2}\chi^{2}(\gamma(j+1))^{p}-8(2\chi^{2}\gamma(j+1))^{p}\\
   &\ge 1-8\chi^{2}(M^{2}+2)(\gamma(j+1))^{p},
   \end{aligned}
   \end{equation}
  which implies that
   \begin{equation}\label{511}
   \frac{\sum_{i=1}^{N(J_{j})}\left|J_{j,i}\right|^{d}}{\left|J_{j}\right|^{d}}
   \ge \frac{\sum_{i=1}^{N(J_{j})}\left|J_{j,i}\right|^{d}}{(\sum_{i=1}^{N(J_{j})}\left|J_{j,i}\right|)^{d}}\times(1-8\chi^{2}(M^{2}+2)(\gamma(j+1))^{p})^{d}.
   \end{equation}

   For any $1\le l\le N(J_{j})$, if $(j+1)\in S_{\varepsilon}(a_{k})$ and $\varepsilon\in(0,\frac{1}{2M^{2}\chi^{2}})$, by  the construction and properties of $\{H_m\}_{m\ge0}$, we have
   \begin{equation}\label{512}
   \begin{aligned}
   \frac{\left|I_{j,l}\right|}{\left|I_{j}\right|}
   &\ge
   \frac{\left|I_{j}\right|-\sum_{l=1}^{N(J_{j})-1}\left|L_{j,l}\right|-\xi_{k,0}^{*}-\xi_{k, n_{k}}^{*}}{2M^{2}\chi\left|I_{j}\right|}\\
   &\ge
   \frac{\left|I_{j}\right|-(M^{2}-1)\overline{\alpha}_{k}^{*}-\overline{\alpha}_{k}^{*}}{2\chi M^{2}\left|I_{j}\right|}\\
   &\ge
   \frac{1}{2\chi M^{2}}(1-M^{2}\Gamma(j+1))\\
   &\ge \frac{1-2\chi^{2}M^{2}\varepsilon}{2\chi M^{2}}.
   \end{aligned}
   \end{equation}
   By Lemma \ref{lem3}, we obtain that
   \begin{equation*}
   1>\frac{\left|J_{j,l}\right|}{\left|J_{j}\right|}\ge\beta(\frac{1-2\chi^{2}M^{2}\varepsilon}{2\chi M^{2}})^{q},
   \end{equation*}
  which implies that
   $$1\ge
   \frac{\left|J_{j,l}\right|}{\max_{1\le i\le N(J_{j})}\left|J_{j,i}\right|}
   \ge\beta(\frac{1-2\chi^{2}M^{2}\varepsilon}{2\chi M^{2}})^{q}.$$
   Notice that for any $d\in(0,1)$ and $x_{1},\cdots,x_{k}\in(0,1]$,
   $$\frac{1+x_{1}^{d}+\cdots+x_{k}^{d}}{(1+x_{1}+\cdots+x_{k})^{d}}\ge(1+\max\left\{x_{1},\cdots,x_{k}\right\})^{1-d},$$
   thus
   \begin{equation}\label{513}
   \begin{aligned}
   \frac{\sum_{i=1}^{N(J_{j})}\left|J_{j,i}\right|^{d}}{(\sum_{i=1}^{N(J_{j})}\left|J_{j,i}\right|)^{d}}
   &=(\frac{\sum_{i=1}^{N(J_{j})}\left|J_{j,i}\right|^{d})/(\max_{1\le i\le N(J_j)}\left|J_{j,i}\right|)^{d}}{(\sum_{i=1}^{N(J_{j})}\left|J_{j,i}\right|)^{d}/(\max_{1\le i\le N(J_j)}\left|J_{j,i}\right|)^{d}}\\
   &\ge (1+\beta(\frac{1-2\chi^{2}M^{2}\varepsilon}{2\chi M^{2}})^{q})^{1-d}\\
   &\triangleq\kappa>1.
   \end{aligned}
   \end{equation}
   Combining (\ref{511}) with (\ref{513}), we obtain that
   $$\frac{\sum_{i=1}^{N(J_{j})}\left|J_{j,i}\right|^{d}}{\left|J_{j}\right|^{d}}
   \ge \kappa\times(1-8\chi^{2}(M^{2}+2)(\gamma(j+1))^{p})^{d}$$
   for any $\varepsilon\in (0,\frac{1}{2M^2\chi^2})$ and $(j+1)\in S_{\varepsilon}(a_{k})$.

   Let $M_{1}$ be an integer satisfying $M_{1}>8\chi^{2}(M^{2}+2)$. Notice that $1-M_{1}x>(1-x)^{M_{1}}$ if $x>0$ is sufficiently small, then if $\varepsilon>0$ is sufficiently small, for $(j+1)\in S_{\varepsilon}(a_{k})$, we have
   \begin{equation}\label{514}
   \frac{\sum_{i=1}^{N(J_{j})}\left|J_{j,i}\right|^{d}}{\left|J_{j}\right|^{d}}
   \ge \kappa\times(1-(\gamma(j+1))^{p})^{M_{1}d}.
   \end{equation}
   For sufficiently small $\varepsilon>0$, combining (\ref{59}) with (\ref{514}), notice that (\ref{52}) holds, we have
   \begin{equation*}
\begin{aligned}
  \prod_{j=0}^{a_k-1}\frac{\sum_{i=1}^{N(J_j)}|J_{j,i}|^d}{|J_{j,i}|^d}
     &\ge \prod_{0\le j\le a_k-1,(j+1)\notin S_{\varepsilon}(a_{k})}(4\chi^2)^{-dq}\beta^d(\Lambda(j+1))^{dq}\\&\times \prod_{0\le j\le a_k-1,(j+1)\in S_{\varepsilon}(a_{k})}\kappa(1-(\gamma(j+1))^p)^{M_1d}\\
     &= \prod_{0\le j\le a_k-1,(j+1)\notin S_{\varepsilon}(a_{k})}(4\chi^2)^{-dq}\beta^d\times\prod_{0\le j\le a_k-1,(j+1)\notin S_{\varepsilon}(a_{k})}(\Lambda(j+1))^{dq}\\
     &\times \prod_{0\le j\le a_k-1,(j+1)\in S_{\varepsilon}(a_{k})}\kappa(1-(\gamma(j+1))^p)^{M_1d}\\
     & \ge {((4M^{2}\chi^2)^{-dq}\beta^d)}^{a_k-\#S_{\varepsilon}(a_{k})}\times{(l(H_{a_k}))}^{dq}\\ &\times \prod_{0\le j\le a_k-1,(j+1)\in S_{\varepsilon}(a_{k})}\kappa(1-(\gamma(j+1))^p)^{M_1d}.
\end{aligned}
\end{equation*}
By (2) of Lemma \ref{lem6}, we have

\begin{equation}\label{5.15}
\lim_{k\to\infty}((4M^{2}\chi^2)^{-dq}\beta^d)^{\frac{a_k-\#S_{\varepsilon}(a_{k})}{a_k}}=1.
\end{equation}
By  (1) of Lemma \ref{lem6}, we have
\begin{equation}\label{5.16}
  \lim_{k\to\infty}(l(H_{a_k}))^{\frac{dq}{a_k}}=1.
\end{equation}
Notice that
\begin{equation*}
 \begin{aligned}
 & \prod_{0\le j\le a_k-1,(j+1)\in S_{\varepsilon}(a_{k})}\kappa(1-(\gamma(j+1))^p)^{M_1d}\\&=\kappa^{\#S_{\varepsilon}(a_{k})}(\prod_{0\le j\le a_k-1,(j+1)\in S_{\varepsilon}(a_{k})}(1-(\gamma(j+1))^p))^{M_1d},
    \end{aligned}
\end{equation*}
combining (2) and (3) of  Lemma \ref{lem6}, we have
\begin{equation}\label{5.17}
 \begin{aligned}
  &\lim_{k\to\infty}(\prod_{0\le j\le a_k-1,(j+1)\in S_{\varepsilon}(a_{k})}\kappa(1-(\gamma(j+1))^p)^{M_1d})^{\frac{1}{a_k}}\\&=\lim_{k\to\infty}\kappa^{\frac{\#S_{\varepsilon}(a_{k})}{a_k}}(\prod_{0\le j\le a_k-1,(j+1)\in S_{\varepsilon}(a_{k})}(1-(\gamma(j+1))^p))^{\frac{M_1d}{a_k}}\\&=\kappa.\
   \end{aligned}
\end{equation}
Together with (\ref{5.15}), (\ref{5.16}) and (\ref{5.17}), we obtain that
\begin{equation*}
  \varliminf_{k\to\infty}(\prod_{j=0}^{a_k-1}\frac{\sum_{i=1}^{N(J_j)}|J_{j,i}|^d}{|J_{j,i}|^d})^{\frac{1}{a_k}}\ge \kappa>1,
\end{equation*}
which implies that
\begin{equation}\label{5.18}
  \varliminf_{k\to\infty}\prod_{j=0}^{a_k-1}\frac{\sum_{i=1}^{N(J_j)}|J_{j,i}|^d}{|J_{j,i}|^d}=\infty.
\end{equation}

   Combining (\ref{5.18}) with the equality (\ref{56}), we obtain that there is a constant $C>0$, such that
   \begin{equation*}
   \mu_{d}(J)\le C\left|J\right|^{d}.
   \end{equation*}
\end{proof}
\bigskip

\subsection{The proof of Theorem \ref{thm}}
Now we start to finish the proof of Theorem \ref{thm}. Let $\{a_k\}_{k\ge 1}$ be the sequence in Lemma \ref{lem6}.  For any $x\in f(E)$, since $h_{x}(r)=\left|f^{-1}(B(x,r))\right|$ is a continuous mapping and $\lim\limits_{r\to 0}h_x(r)=0$, there exists a sequence  $\left\{r_{k}\right\}_{k\ge1}$ satisfying
\begin{equation*}
\min_{I\in\mathcal H_{a_{k}}}\left|I\right|\le h_{x}(r_{k})<\min_{I\in\mathcal H_{a_{k}-1}}\left|I\right|.
\end{equation*}
Then $f^{-1}(B(x,r_{k}))$ meets at most two branches of $H_{a_{k}-1}$, thus it meets at most $2M^{2}$ branches of $H_{a_{k}}$, and $B(x,r_{{k}})$ meets at most $2M^{2}$ branches of $f(H_{a_{k}})$ .

Let $R_{1},R_{2},\cdots,R_{l}\ (1\le l\le 2M^{2})$ be the branches of $f(H_{a_{k}})$ meetting $B(x,r_{k})$, then
\begin{equation*}
B(x,r_{k})\cap f(E)\subset R_{1}\cup R_{2}\cup\cdots\cup R_{l}.
\end{equation*}
By Proposition \ref{prop}, we get
\begin{equation}\label{mu11}
\mu_{d}(B(x,r_{k}))=\mu_{d}(B(x,r_{k})\cap f(E))\le\sum_{j=1}^{l}\mu_{d}(R_{j})\le C\sum_{j=1}^{l}\left|R_{j}\right|^{d}.
\end{equation}

Notice that
\begin{equation*}
\min_{I\in\mathcal H_{a_{k}}}\left|I\right|\le \left|f^{-1}(B(x,r_{k}))\right|,\ \max_{I\in\mathcal H_{a_{k}}}\left|I\right|\le 2\chi\min_{I\in H_{a_{k}}}\left|I\right|,
\end{equation*}
then for any $1\le j \le l$,
\begin{equation*}
\left|f^{-1}(R_{j})\right|\le\max_{I\in\mathcal H_{a_{k}}}\left|I\right|\le 2\chi\min_{I\in\mathcal H_{a_{k}}}\left|I\right|\le 2\chi\left|f^{-1}(B(x,r_{k}))\right|.
\end{equation*}
Notice that $B(x,r_{k})\cap R_{j}\neq\emptyset$ for any $1\le j \le l$ ,  then
\begin{equation*}
f^{-1}(R_{j})\subset 6\chi f^{-1}(B(x,r_{k})),
\end{equation*}
where for any $\rho>0$, $\rho I$ denotes the interval with the same center of the  interval $I$, and $|\rho I|=\rho|I|$.
By Lemma \ref{lem3}, there is a constant $K_{6\chi}$, such that
\begin{equation}\label{lt}
|R_j|=\left|f(f^{-1}(R_{j}))\right|\le\left|f(6\chi f^{-1}(B(x,r_{k})))\right|\le K_{6\chi}\left|B(x,r_{k})\right|= 2K_{6\chi}r_{k}.
\end{equation}

Combing (\ref{mu11}) and (\ref{lt}), we have
\begin{equation*}
\begin{aligned}
\mu_{d}(B(x,r_{k}))
&\le C\sum_{j=1}^{l}\left|R_{j}\right|^{d}\\
&\le C\cdot 2M^{2}(2K_{6\chi}r_{k})^{d}\\
&=4K_{6\chi}^{d}M^{2}C(r_{k})^{d}\\
&\triangleq C_{1}(r_{k})^{d}.
\end{aligned}
\end{equation*}
Since $\lim\limits_{k\to \infty}r_{k}=0$, then for any $x\in f(E)$,
\begin{equation}\label{516}
\varliminf_{r \rightarrow 0}\frac{\mu_{d}(B(x,r))}{r^{d}}\le C_{1}.
\end{equation}

By Lemma \ref{lem2} and (\ref{516}), we obtain that $\operatorname{dim}_{P}f(E)\ge d$. Notice that $d\in(0,1)$ is arbitrary, then we have
\begin{equation*}
\operatorname{dim}_{P}f(E)\ge1,
\end{equation*}
which implies that $\operatorname{dim}_{P}f(E)=1$.

\bigskip

\end{document}